\def\HHML { \mathbb{H}^1_{L, {mol}, M}(X) }
\begin{document}
\baselineskip 16pt

\newcommand\RR{\mathbb{R}}
\def\RN {\mathbb{R}^n}
\newcommand{\norm}[1]{\left\Vert#1\right\Vert}
\newcommand{\abs}[1]{\left\vert#1\right\vert}
\newcommand{\set}[1]{\left\{#1\right\}}
\newcommand{\Real}{\mathbb{R}}
\newcommand{\R}{\mathbb{R}}
\newcommand{\supp}{\operatorname{supp}}
\newcommand{\card}{\operatorname{card}}
\renewcommand{\L}{\mathcal{L}}
\renewcommand{\P}{\mathcal{P}}
\newcommand{\T}{\mathcal{T}}
\newcommand{\A}{\mathbb{A}}
\newcommand{\K}{\mathcal{K}}
\renewcommand{\S}{\mathcal{S}}
\newcommand{\Id}{\operatorname{I}}
\newcommand\wrt{\,{\rm d}}
\newcommand\Ad{\,{\rm Ad}}
\def\HR { H^1({\Bbb R}^n) }
\def\BL {{\rm BMO}_{L,M}(X)}
\def\HHAL { \mathbb{H}^1_{L,{at}, M}(X) }
\def\HAL { H^1_{L,{at}, M}(X) }
\def\HA{ H^1_{L,{at}, 1}(X) }
\def\HSL { H^1_{L }(X) }
\def\HML { H^1_{L, {mol}, M, \epsilon}(X) }
\def\HHML { \mathbb{H}^1_{L, {mol}, M, \epsilon}(X) }
\def\HM{ H^1_{L, {mol}, 1}(X) }
\def\HMH { H^1_{L, {\rm max}, h}(X) }
\def\HNH { H^1_{L, {\mathcal N}_h}(X) }

\def\HSP { H^1_{L, S_P}(X) }
\def\HMP { H^1_{L, {\rm max}, P}(X) }
\def\HNP { H^1_{L, {\mathcal N}_P}(X) }

\def\SL{\sqrt[m]L}
\newcommand{\mar}[1]{{\marginpar{\sffamily{\scriptsize
        #1}}}}
\newcommand{\li}[1]{{\mar{LY:#1}}}
\newcommand{\el}[1]{{\mar{EM:#1}}}
\newcommand{\as}[1]{{\mar{AS:#1}}}

\newcommand\CC{\mathbb{C}}
\newcommand\NN{\mathbb{N}}
\newcommand\ZZ{\mathbb{Z}}

\renewcommand\Re{\operatorname{Re}}
\renewcommand\Im{\operatorname{Im}}

\newcommand{\mc}{\mathcal}
\newcommand\D{\mathcal{D}}

\newtheorem{thm}{Theorem}[section]
\newtheorem{prop}[thm]{Proposition}
\newtheorem{cor}[thm]{Corollary}
\newtheorem{lem}[thm]{Lemma}
\newtheorem{lemma}[thm]{Lemma}
\newtheorem{exams}[thm]{Examples}
\theoremstyle{definition}
\newtheorem{definition}[thm]{Definition}
\newtheorem{rem}[thm]{Remark}

\numberwithin{equation}{section}
\newcommand\bchi{{\chi}}

\title[Sharp endpoint  estimates for Schr\"odinger  groups   ]
{Sharp endpoint estimates for Schr\"odinger  groups\\[1pt]  on   Hardy spaces
}

 \author  [P. Chen, X.T. Duong,   J. Li  and L. Yan]
 { Peng Chen, \, Xuan Thinh Duong,  \,  Ji Li \,  and \, Lixin Yan}

 \address{Peng Chen, Department of Mathematics, Sun Yat-sen
University, Guangzhou, 510275, P.R. China}
\email{chenpeng3@mail.sysu.edu.cn}

 \address{Xuan Thinh Duong, Department of Mathematics, Macquarie University, NSW 2109, Australia}
\email{xuan.duong@mq.edu.au}

\address{Department of Mathematics, Macquarie University, NSW, 2109, Australia}
\email{ji.li@mq.edu.au}

\address{Department of Mathematics, Sun Yat-sen (Zhongshan) University, Guangzhou, 510275, P.R. China}
\email{mcsylx@mail.sysu.edu.cn}

  \date{\today}
 \subjclass[2010]{42B37, 35J10,  42B30}
\keywords{ Sharp endpoint  estimate, Schr\"odinger  group,  Davies-Gaffney estimate, Hardy space,  space of homogeneous type}

\begin{abstract}
Let $L$ be a non-negative self-adjoint operator acting on $L^2(X)$
where $X$ is a space of homogeneous type with a dimension $n$. Suppose that
the heat kernel of  $L$
satisfies  the Davies-Gaffney estimates of order $m\geq 2$.  Let $H^1_L(X)$ be the Hardy space associated
 with $L.$   In this paper
 we show  sharp endpoint estimate  for the Schr\"odinger group $e^{itL}$
 associated with $L$
  such that
 \begin{eqnarray*}
 \left\| (I+L)^{-{n/2}}e^{itL} f\right\|_{ L^1(X)}  +  \left\| (I+L)^{-{n/2}}e^{itL} f\right\|_{ H^1_L(X)}
 \leq  C(1+|t|)^{n/2}\|f\|_{H^1_L(X)}, \ \ \ t\in{\mathbb R}
\end{eqnarray*}
for some  constant $C=C(n, m)>0$ independent of $t$. By a duality and interpolation argument,
it    gives  a new  proof of
 a recent result of \cite{CDLY} for {  sharp} endpoint $L^p$-Sobolev bound    for  $e^{itL}$:
$$
 \left\| (I+L)^{-s }e^{itL} f\right\|_{ L^p(X)}  \leq C (1+|t|)^{s} \|f\|_{ L^p(X)}, \ \ \ t\in{\mathbb R}, \ \
  \ s\geq n\big|{1\over  2}-{1\over  p}\big|
$$
 for   every $1<p<\infty$ when the heat kernel of $L$ satisfies a  Gaussian upper bound,
which   extends  the classical  results due to  Miyachi  (\cite{Mi1, Mi}) for the Laplacian
on the Euclidean space ${\mathbb R}^n$.
 \end{abstract}

\maketitle


\section{Introduction}\label{sec:intro}
\setcounter{equation}{0}

Consider the Laplace operator $\Delta=-\sum_{i=1}^n\partial_{x_i}^2$ on the Euclidean space $\mathbb R^n$
and  the Schr\"odinger equation
 \begin{eqnarray*}\label{e1.00}
\left\{
\begin{array}{ll}
  i{\partial_t u } +\Delta u=0,\\[4pt]
 u|_{t=0}=f
\end{array}
\right.
\end{eqnarray*}
with initial data $f$. Its solution     can be written as
 \begin{eqnarray*}\label{e1.0}
 u(x, t)=e^{it\Delta} f(x)={1\over (2\pi)^n}\int_{{\mathbb R}^n} {\widehat f}(\xi) e^{i(  \langle x, \,  \xi\rangle +t|\xi|^2 )} d\xi,
 \end{eqnarray*}
 where ${\widehat f}$ denotes the Fourier transform of $f$.
It is well-known that the operator $e^{it\Delta}$ acts boundedly on $L^p({\mathbb R}^n)$ if and only if $p=2$;
 see  H\"ormander \cite{H1}. For $p\not= 2, $
it was shown (see for example,  \cite{Br, La, Sj})) that for $s > n|{1/ 2}-{1/p}|$, the operator
 $e^{it\Delta}$ maps the Sobolev space $L^p_{2s}(\RN)$ into $L^p(\RN)$, in other words,
$(I+\Delta)^{-s } e^{it\Delta}$ is bounded on $L^p(\RN)$.  For $s < n|{1/ 2}-{1/p}|$, it is known that the operator
$(I+\Delta)^{-s } e^{it\Delta}$ is unbounded on $L^p(\RN)$.
In \cite{Mi1},  Miyachi obtained the sharp endpoint  estimate for  $e^{it\Delta}$  on Hardy and Lebesgue spaces,
 and showed  that for every $0<p<\infty,$
\begin{eqnarray}\label{e1.1}
 \left\|  (1+\Delta)^{-s} e^{it\Delta} f\right\|_{H^p(\mathbb R^n)} \leq C  (1+|t|)^{s}\|f\|_{H^p (\mathbb R^n)},
  \ \ \ t\in{\mathbb R}, \ \ \ s\geq  n\big|{1\over  2}-{1\over  p}\big|,
\end{eqnarray}
where   $H^p({\mathbb R}^n)$ is  the classical Hardy space (\cite{FS}) on ${\mathbb R}^n$
  and $H^p({\mathbb R}^n)=L^p({\mathbb R}^n)$ if $1<p<\infty$.
  See also Fefferman-Stein's work \cite[Section 6]{FS}.

The Schr\"odinger semigroup $\{e^{it\Delta}\}_{t>0}$ can be defined in terms of the spectral resolution of
the self-adjoint Laplace operator $\Delta.$ A natural question is to determine a sufficient condition so that
\eqref{e1.1} holds when the Laplace $\Delta$ is replaced by a non-negative self-adjoint operator $L.$
  For this purpose we suppose  that $(X,d,\mu) $ is a metric measure space
 with a distance $d$ and a measure  $\mu$, and   $L$ is a non-negative self-adjoint operator on $L^2(X).$
  Such an operator $L$ admits a spectral
resolution
\begin{eqnarray*}
L=\int_0^{\infty} \lambda dE_L(\lambda),
\end{eqnarray*}
where  $E_L(\lambda)$ is the projection-valued measure supported on the spectrum of $L$.
The operator   $   e^{itL}, t\in{\mathbb R},$ is defined by
 \begin{equation}\label{e1.3}
 e^{itL}f =   \int_0^{\infty}    e^{it\lambda}dE_L(\lambda) f
   \end{equation}
 for $f\in L^2(X)$, and forms   the Schr\"odinger group.
   By the spectral theorem (\cite{Mc}),  the operator   $  e^{itL}$  is  continuous on $L^2(X)$.
   Our main interest will be in  the mapping properties
of families of operators derived from the Schr\"odinger group
on Hardy and Lebesgue spaces.

 Depending on the nature of the assumptions regarding the assumption of $e^{-tL}$, there
 are various nuances of  the mapping properties
of the Schr\"odinger group $e^{-tL}$
on $L^p$ spaces  presently available in the literature.
 For example, on Lie groups with polynomial growth and manifolds with non-negative Ricci curvature, similar results as in \eqref{e1.1}
for $s> n\left|{1/ 2}-{1/ p}\right|$ and $1<p<\infty$ have been first announced by Lohou\'e in \cite{Lo},
then obtained by Alexopoulos  in \cite{A}.
   In the abstract setting
of operators on metric measure spaces,   Carron, Coulhon and Ouhabaz \cite{CCO}  showed  that for every $1<p<\infty,$
\begin{eqnarray} \label{e1.5553}
 \left\| (I+L)^{-s }e^{itL} f\right\|_{p} \leq C (1+|t|)^{s} \|f\|_{p}, \ \ \ t\in{\mathbb R}, \ \
  \ s> n\big|{1\over  2}-{1\over  p}\big|,
\end{eqnarray}
  provided
the semigroup $e^{-tL}$, generated by $-L$ on $L^2(X)$,  has the kernel  $p_t(x,y)$
which  satisfies
the  Gaussian upper bound, i.e.
\begin{equation*}
 \label{GE}
 \tag{${\rm GE}_m$}
|p_t(x,y)| \leq {C\over V(x,t^{1/m})} \exp\left(-c \, {  \left({d(x,y)^{m}\over    t}\right)^{1\over m-1}}\right)
\end{equation*}
for every $t>0, x, y\in X$, where $c, C$ are   positive constants and $m\geq 2.$
Such estimate  \eqref{GE} is typical for elliptic or sub-elliptic differential operators of order $m$
(see for example, \cite{A,  CCO,   D, DM, DOS,  JN, JN2, O,  Si, Sj}
and the references therein). See also  related results in \cite{BDN, DN, FS, JN, JN2}.

The question whether  estimate \eqref{e1.5553} holds with  $s =  n|{1/2}-{1/p}|$ was recently
solved in  \cite{CDLY}. More specifically, if $L$ satisfies the   Gaussian estimate \eqref{GE}, then
 for every  $p\in (1, \infty)$ there exists a  constant $C=C(n,m, p)>0$ independent of $t$ such that
 \begin{eqnarray} \label{e1.5}
 \left\| (I+L)^{-s }e^{itL} f\right\|_{p} \leq C (1+|t|)^{s} \|f\|_{p}, \ \ \ t\in{\mathbb R}, \ \
  \ s= n\big|{1\over  2}-{1\over  p}\big|.
\end{eqnarray}
However, this result does not give any end-point estimate on the Hardy space $H^1_L(X)$ when $p=1.$

This paper   continues a line of study in \cite{CDLY} to  show that the operator
$(I+L)^{-n/2 }e^{itL}$ is bounded   on  Hardy spaces $H^1_L(X)$
under the assumption that
 $L$ satisfies   {\it $m$-th order Davies-Gaffney  estimates},
that is,  there exist constants $C, c>0$ such that
for all $t>0$,  and all $x,y\in X,$
 \begin{equation*}
 \label{DG}
 \tag{${\rm DG}_m$}
\big\|P_{B(x, t^{1/m})} e^{-tL} P_{B(y, t^{1/m})}\big\|_{2\to {2}}\leq
C   \exp\left(-c\left({d(x,y) \over    t^{1/m}}\right)^{m\over m-1}\right)
\end{equation*}
where $P_{B(x, t^{1/m})}$ denotes the characteristic function on the ball $B(x, t^{1/m})$
and $H^1_L(X)$  denotes the  Hardy space   associated with  $L$ (\cite{ADM, DY, KU},
see Section 2  below).
  We then apply the duality argument and the   complex interpolation result (see Lemma 4.1 below ) to obtain
 a new  proof of
 estimate \eqref{e1.5} in \cite{CDLY}    in the case that the operator $L$
satisfies  a  Gaussian upper bound \eqref{GE}.

Note that the $m$-th order Davies-Gaffney  estimate \eqref{DG} is much more general than the Gaussian estimate \eqref{GE}.
Indeed,  if an operator $L$ satisfies the \eqref{GE} estimate, then $L$ satisfies the \eqref{DG} estimate.
However, there are large classes of operators which satisfy  the \eqref{DG} estimate but not the
 \eqref{GE}  estimate.
  This happens, e.g., for Schr\"odinger operators with rough
 potentials \cite{ScV}, second order elliptic operators with rough  lower order terms \cite{LSV}, or
 higher order elliptic operators with bounded measurable coefficients
 \cite{D2}. See also \cite{Bl,  BK2, COSY, KU}.

 Our result can be stated as  follows.

\begin{thm}\label{th1.1}
Suppose  that $(X, d, \mu)$ is  a  space of homogeneous type  with a dimension $n$.  Suppose that $L$
satisfies the property \eqref{DG}.
Then  there exists a  constant $C=C(n, m)>0$ independent of $t$ such that
\begin{eqnarray} \label{e1.6}
  \left\| (I+L)^{-n/2 }e^{itL} f\right\|_{L^1(X)} +  \left\| (I+L)^{-n/2 }e^{itL} f\right\|_{H_L^1(X)}
  \leq C (1+|t|)^{n/2} \|f\|_{H_L^1(X)}, \ \ \ t\in{\mathbb R}.
\end{eqnarray}
By interpolation and duality argument, we have that for $1<p\leq 2$,
 \begin{eqnarray} \label{e1.555}
 \left\| (I+L)^{-s }e^{itL} f\right\|_{L^p(X)} \leq C (1+|t|)^{s} \|f\|_{H_L^p(X)}, \ \ \ t\in{\mathbb R}, \ \
  \ s= n\big|{1\over  2}-{1\over  p}\big| 
\end{eqnarray}
and for $2<p<\infty$,
 \begin{eqnarray} \label{e1.55522}
 \left\| (I+L)^{-s }e^{itL} f\right\|_{H^p_L(X)} \leq C (1+|t|)^{s} \|f\|_{L^p(X)}, \ \ \ t\in{\mathbb R}, \ \
  \ s= n\big|{1\over  2}-{1\over  p}\big|.
\end{eqnarray}

\end{thm}

\medskip

\begin{rem}

	(i) First, we would like to remark that our main result, Theorem~\ref{th1.1}, implies the main result in  \cite{CDLY} which states that
	under the generalised Gaussian estimate, we can obtain the sharp estimate for the Shr\"odinger group on Lebesgue spaces.
	For the convenience of the reader, we recall that the semigroup $e^{-tL}$    satisfies
	the generalized Gaussian  $(p_0, p'_0)$-estimate   of order $m$ (in which $1 \le p_0 < 2$ and ${1\over p_0} + {1\over p_0'} = 1)$,
	if there exist constants $C, c>0$ such that
	\begin{equation*}
	\label{GGE}
	\tag{${\rm GGE_{p_0,p'_0, m} }$}
	\big\|P_{B(x, t^{1/m})} e^{-tL} P_{B(y, t^{1/m})}\big\|_{p_0\to {p'_0}}\leq
	C V(x,t^{1/m})^{-({\frac{1}{ p_0}}-{1\over p'_0})} \exp\left(-c\left({d(x,y)^m \over    t }\right)^{1\over m-1}\right)
	\end{equation*}
	for  every $t>0$ and $x, y\in X$.
	In \cite{CDLY},  sharp estimate for the Shr\"odinger group on $L^p(X)$ was obtained for the range $p_0 < p < p_0'$
	under the assumption of $({\rm GGE_{p_0,p'_0, m} })$.
	
	Observe that by H\"older's inequality,  the generalized Gaussian  $(p_0, p'_0)$-estimate implies the generalized
	Gaussian  $(p_1, p'_1)$-estimate for $1 \le p_0 < p_1 \le 2$, hence implies  the  Davies-Gaffney  estimate \eqref{DG}
	(which is precisely the generalized Gaussian  $(p_0, p'_0)$-estimate when $p_0 =2$), see for example \cite{Bl}.
	Note also that under  the generalized Gaussian  $(p_0, p'_0)$-estimate, the Hardy space associated to operator
	$H^p_L(X)$ coincides with $L^p(X)$ for $p_0 < p \le 2$ (See for example \cite{KU}). Hence this paper gives a new proof for the
	main result in \cite{CDLY}.

	(ii) This paper gives a new end-point estimate on the Hardy space  $H^1_L(X)$ for large classes of operators which only require the
	$m$-th order Davies-Gaffney  estimate \eqref{DG}. Our result
	gives the sharp endpoint  estimate  \eqref{e1.6}
	for the  Schr\"odinger group $e^{itL}$ on the Hardy space, namely with the optimal number of derivatives
	and the optimal time growth for the factor
	$(1+|t|)^{s}$ in \eqref{e1.6}. While our endpoint estimate is obtained in terms of the Hardy space $H^1_L(X)$ associated to the operator $L$ instead of
	the classical Hardy space in the sense of Coifman and Weiss, it is known that if we assume stronger standard conditions on the operator $L$ such as
	the Gaussian estimate \eqref{GE} and H\"older continuity on the heat kernel, and
	the conservation property
	$
	e^{-tL}1 = 1,
	$
	then the Hardy space $H^1_L(X)$ associated to the operator $L$ coincides with
	the classical Hardy space.
	
	Note that when $L$ is the Laplace operator $\Delta$ on the Euclidean spaces ${\mathbb R}^n$,
  our Theorem~\ref{th1.1}  gives a direct proof of the following result:
  \begin{eqnarray}\label{zzz}
   \left\|  (1+\Delta)^{-n/2} e^{it\Delta}  f\right\|_{H^1(\mathbb R^n)}
  &\leq &   C(1+|t|)^{n/2} \| f\|_{H^1 (\mathbb R^n)}.
\end{eqnarray}
In \cite{Mi1},  Miyachi proved  the above estimate \eqref{zzz}    by using interpolation between $H^p(\mathbb R^n)$ for $p<1$ and $L^2(\mathbb R^n)$
for the Schr\"odinger group  $e^{it\Delta}$ on the Euclidean space $\mathbb R^n.$
	
	(iii) We also remark that the results
	in \cite{FS, Mi1, Mi} relies on Fourier analysis (e.g., Plancherel's Theorem),
	which is not available in the setting of space of homogeneous type in this paper.
	In the proof of Theorem~\ref{th1.1}, the main tool is to use the Phragm\'en-Lindel\"of theorem to show
	that the $m$-th order Davies-Gaffney  estimate \eqref{DG}   implies
	the following off-diagonal estimate of the operator
	$ e^{zL} $ with  $ z=(i\tau-1)R^{-1}, \tau, R>0$:
	\begin{eqnarray} \label{evv}
	\|P_{B}   e^{(i\tau-1)R^{-1}L} P_{2^jB\backslash 2^{j-1}B} \|_{2\to 2}\leq
	C
	\exp\left(-c\left({ \sqrt[m]{R} 2^j r \over    \sqrt{1+\tau^2}}\right)^{\frac{m}{m-1}}   \right),\ \ \ j=2,3,\ldots
	\end{eqnarray}
	for all  balls $B\subseteq  X$
	(see Lemma~\ref{le2.2} below).
	This new estimate  \eqref{evv} is   crucial in the proof of Theorem~\ref{th1.1}.
	
(iv)	In Section 5 we   apply Theorem~\ref{th1.1}
	to   the Schr\"odinger group
	of the Kohn Laplacian $\Box_b$  on polynomial model domains treated by Nagel-Stein~\cite{NS},
	where  $e^{-t\Box_b}$  satisfies $m$-th order Davies-Gaffney  estimates~\eqref{DG} with $m=2$.
	We note that in general polynomial model domains, $e^{-t\Box_b}$  does not satisfy  the generalized Gaussian  $(p_0, p'_0)$-estimate
	hence the result in \cite{CDLY} is not applicable to the Schr\"odinger group of the Kohn Laplacian $\Box_b$. The reason for $e^{-t\Box_b}$
	not satisfying the generalised Gaussian estimate is that $e^{-t\Box_b}$ could have singularity on the diagonal
	since the null space of $\Box_b$ may not be $\{0\}$.  It is worth pointing out that if the null space of $\Box_b$ is $\{0\}$,
	then $e^{-t\Box_b}$ satisfies the standard Gaussian upper bound, see for example \cite{BR}.
	
\end{rem}

The paper is organized as follows.
In Section 2 we provide some preliminary results on Hardy spaces
and   spectral multipliers. In Section 3 we   apply the Phragm\'en-Lindel\"of theorem
to
give off-diagonal bounds for \eqref{evv} and the operator $F(L)$
for some  compactly supported function $F$. This plays a crucial role in the proof of   Theorem~\ref{th1.1}
which  will be given in Section 4. In Section 5 we   give an application of  Theorem~\ref{th1.1}
in a study of   the Schr\"odinger group
for the Kohn Laplacian  on polynomial model domains.

\medskip

\section{Notations and preliminaries on Hardy spaces}
\setcounter{equation}{0}

We start by introducing  some notation and assumptions.  Throughout this paper,
unless we mention the contrary, $(X,d,\mu)$ is a metric measure  space where $\mu$
is a Borel measure with respect to the topology defined by the metric $d$.
Next, let
$B(x,r)=\{y\in X,\, {d}(x,y)< r\}$ be  the open ball
with centre $x\in X$ and radius $r>0$. To simplify notation we often just use $B$ instead of $B(x, r)$ and
given $\lambda>0$, we write $\lambda B$ for the $\lambda$-dilated ball
which is the ball with the same centre as $B$ and radius $\lambda r$. Let $B^c$ be the set $X\backslash B$.
We set $V(x,r)=\mu(B(x,r))$ the volume of $B(x,r)$ and we say that $(X, d, \mu)$ satisfies
 the doubling property (see Chapter 3, \cite{CW})
if there  exists a constant $C>0$ such that
\begin{eqnarray}
V(x,2r)\leq C V(x, r)\quad \forall\,r>0,\,x\in X. \label{eq2.1}
\end{eqnarray}
If this is the case, there exist  $C, n$ such that for all $\lambda\geq 1$ and $x\in X$
\begin{equation}
V(x, \lambda r)\leq C\lambda^n V(x,r). \label{eq2.2}
\end{equation}
In the Euclidean space with Lebesgue measure, $n$ corresponds to
the dimension of the space.

  For $1\le p\le+\infty$, we denote the
norm of a function $f\in L^p(X,{\rm d}\mu)$ by $\|f\|_p$, by $\langle \cdot,\cdot \rangle$
the scalar product of $L^2(X, {\rm d}\mu)$, and if $T$ is a bounded linear operator from $
L^p(X, {\rm d}\mu)$ to $L^q(X, {\rm d}\mu)$, $1\le p, \, q\le+\infty$, we write $\|T\|_{p\to q} $ for
the  operator norm of $T$.
Given a  subset $E\subseteq X$, we  denote by  $\chi_E$   the characteristic
function of   $E$ and  by $P_E$ the projection
$
P_Ef(x):=\chi_E(x) f(x).
$
We denote the dilation of a function $F$ by $\delta_r F(\cdot):=F(r\cdot)$ and
$\widehat{f}\,$ denotes the Fourier  transform, i.e.
of $f$,
$$
\widehat{f}(\xi)={1\over (2\pi)^{n/2}}\int_{\mathbb R^n} f(x)e^{-ix\xi} dx, \ \ \ \ \xi\in \mathbb R^n.
$$
Sometimes we also use  $\widehat{f}$ for ${\mathcal F} f$.

\medskip
\subsection{Hardy   spaces associated with operators}
 A theory of Hardy spaces associated with certain operators was introduced and developed in \cite{ADM, DL, DY2, DY3, HLMMY, KU}
 and the references therein, similar to
 the way that classical Hardy spaces are adapted to the Laplacian. We present some main features of this theory
 in this section for reader's convenience.

  Suppose that $L$ is a non-negative
 self-adjoint operator on $L^2(X)$ which satisfies $m$-th order Davies-Gaffney  estimates~\eqref{DG} with $m\geq 2$.
Following \cite{HLMMY}, we define the $L^2$ adapted Hardy space
$$
H^2(X):=H^2_L(X):=\overline{R(L)},
$$
that is, the closure of the range of $L$ in $L^2(X)$.
Then $L^2(X)$
is the orthogonal sum of $H^2(X)$ and the null space $N(L)$.
Consider the following quadratic operators associated 
to $L$
   
\begin{eqnarray}
S_{h, K}f(x)=\Big(\int_0^{\infty}\!\!\!\!\int_{\substack{  d(x,y)<t}}  
|(t^2L)^{K}e^{-t^2L} f(y)|^2 {d\mu(y)\over V(x,t)}{dt\over t}\Big)^{1/2},
\quad x\in X 
\label{e2.1}
\end{eqnarray}

\noindent
where $f\in L^2(X)$. We shall write $S_{h}$ in place of $S_{h, 1}$. 
For each $K\geq 1$ and $1\leq p<\infty$, we now define
  
\begin{eqnarray*} 
D_{K, p} =\Big\{ f\in H^2(X): \ S_{h, K}f\in L^p(X)\Big\}, \ \ \ 
1\leq p<\infty.
\end{eqnarray*}

\begin{definition}{\label{def2.2}} Let $L$ be  
 a  self-adjoint positive definite  operator on $L^2({X})$ 
 satisfying the Davies-Gaffney estimate (\ref{e1.5}). 
 
 \medskip
 
(i) For each $1\leq p\leq 2$, the Hardy space $H^p_{L, S_h}(X)$ associated to
$L$  is the completion of the space $D_{1, p}$ in the norm
$$
 \|f\|_{H_{L, S_h}^p(X)}=  \|S_{h}f\|_{L^p(X)}.
$$

(ii) For each $2<p<\infty$, the Hardy space $H^p_{L}(X)$ associated to
$L$ is the completion of the space $D_{K_0, p}$ in the norm 
$$
\|f\|_{H_{L, S_h}^p(X)}=  \|S_{h, K_0}f\|_{L^p(X)}, \ \ \ \ 
K_0=\left[\,{n\over 4}\,\right]+1.
$$
\end{definition}

The Hardy spaces associated to $L$ are known to possess nice properties,
for example,   they form a complex interpolation scale (see Lemma~\ref{th7.55} below).
Note that, in the framework of 
the present paper, we only assume the Davies-Gaffney estimates on the heat 
kernel of $L$, and hence for $1<p<\infty$, $p\not= 2$, 
$H^p_{L, S_h}(X)$ may or 
may not coincide with the space $L^p(X)$.  However, it can be verified that 
$H^2_{L, S_h}(X)=H^2(X)$.
 It remains an open problem, in this general context, to determine whether
$H^1_L(X)\subseteq L^1(X)$  (see \cite[p. 70]{HLMMY} and \cite{AMM}).

 Let us describe the notion of a
{\it $(1,2,M,\varepsilon)$-molecule}
associated to an operator $L$  on spaces $(X,d,\mu)$.  Denote by ${\mathcal D}(T)$
the domain of an operator $T$. For every ball $B$, we set
\begin{equation}
U_0(B)=B,  \ \  {\rm and}\ \ \ U_j(B)=2^j B\backslash 2^{j-1}B
\,\,\mbox{ for }\,\,j=1,2, \dots.
\label{e2.3}
\end{equation}

\begin{definition} \label{def2.7} Let   $ \varepsilon>0$ and
$M\in{\mathbb N}$.
A function $m(x)\in L^2(X)$ is called a
$(1,2,M,\varepsilon)$-molecule associated with $L$ if there exist a function
$b\in {\mathcal D}(L^M)$ and a ball $B$ such that

\medskip

{ (i)}\ $m=L^M b$;

\medskip

{ (ii)}\  For every $k=0,1,2,\dots,M$ and $j=0,1,2,\dots$, there holds
$$
\|(r_B^mL)^{k}b\|_{L^2(U_j(B))}\leq 2^{-j\varepsilon} r_B^{mM}
V(2^jB)^{-{1/2}},
$$
\noindent where the annuli $U_j(B)$ are defined in (\ref{e2.3}).
\end{definition}

Next we give the definition of the molecular Hardy spaces associated with $L$.

\begin{definition}\label{def 2.4} We fix $\varepsilon > 0$ and
$M\in{\mathbb N}$.
The Hardy space $\HML$ is defined as follows.
We say that
$f= \sum\lambda_j m_j$ is a molecular
$(1,2,M,\varepsilon)$-representation (of $f$) if $ \{\lambda_j\}_{j=0}^{\infty}\in {\ell}^1$,
each $m_j$ is a $(1,2,M,\varepsilon)$-molecule, and the sum converges in $L^2(X).$
Set
\begin{equation*}
\HHML= \Big\{f:
\mbox{$f$ has a molecular $(1,2,M,\varepsilon)$-representation} \Big\},
\end{equation*}
with the norm given by
$$
||f||_{\HML}= {\rm inf}\Big\{\sum_{j=0}^{\infty}|\lambda_j|: f=\sum\limits_{j=0}^{\infty}\lambda_jm_j\,
\mbox{ is a molecular $(1,2,M,\varepsilon)$-representation} \Big\}.
$$
The space $\HML$ is then defined as the completion of $\HHML$ with respect to this norm.
\end{definition}

As a direct consequence of the definition, we note that $H^1_{L, {mol}, M_2, \varepsilon}(X)\subset
H^1_{L, {mol}, M_1, \varepsilon}(X)$ for $\varepsilon>0$ and $M_1, M_2\in{\mathbb N}$ with $M_1\leq M_2$.
We  have the following characterization. For its  proof, see\cite[ Section 3]{DL}.

\begin{lemma}\label{prop2.10} Suppose   $M\geq n/4$.
  Then we have $\HML = \HSL$.  Moreover,
$$
\|f\|_{\HML} \approx \|f\|_{H^1_{L}(X)},
$$
\noindent
where the implicit constants depend only on $M, m$ and  $n$   in \eqref{eq2.2}  only.
\end{lemma}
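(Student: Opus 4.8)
Fix $M\ge1$. The plan is to prove the two continuous inclusions $\HML\hookrightarrow\HSL$ and $\HSL\hookrightarrow\HML$ with operator norms controlled by constants depending only on $M$, $m$ and the doubling exponent $n$ from \eqref{eq2.2}; this gives the asserted identity and norm equivalence (the nesting noted in the remark above then also shows the molecular spaces for different $M$ all coincide). I would establish both inclusions on the dense subclasses of $L^2$-functions lying in the respective spaces and pass to completions by a routine limiting argument. The one analytic input beyond bookkeeping is that, by \eqref{GE} and the spectral theorem, the operators $(t^mL)^{\ell}e^{-t^mL}$ ($\ell\in\NN$) are uniformly bounded on $L^2(X)$ and satisfy $L^2$ off-diagonal (Davies--Gaffney/Gaussian) bounds
\[
\|P_{E}(t^mL)^{\ell}e^{-t^mL}P_{F}\|_{2\to2}\le C_\ell\exp\Bigl(-c\bigl(\operatorname{dist}(E,F)^m/t^m\bigr)^{1/(m-1)}\Bigr),
\]
valid for all Borel sets $E,F\subseteq X$; these will be used repeatedly.

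For $\HML\hookrightarrow\HSL$ it suffices to produce $C=C(n,m,M)$ with $\|S_Lm\|_{L^1(X)}\le C$ for every $(1,2,M,\varepsilon)$-molecule $m$ adapted to a ball $B=B(x_B,r_B)$: since $S_L$ is sublinear, for a molecular representation $f=\sum_j\lambda_jm_j$ one gets $\|f\|_{H^1_L}=\|S_Lf\|_1\le\sum_j|\lambda_j|\,\|S_Lm_j\|_1\le C\sum_j|\lambda_j|$, and taking the infimum over representations yields $\|f\|_{H^1_L}\lesssim\|f\|_{\HML}$. To bound $\|S_Lm\|_1$ I would split $X=\bigcup_{k\ge0}U_k(B)$ as in \eqref{e2.3}. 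On the local region $4B=\bigcup_{k\le1}U_k(B)$ use the $L^2$-boundedness of $S_L$ recorded in Section~2, Cauchy--Schwarz, and the $j=k=0$ molecular bound: $\|S_Lm\|_{L^1(4B)}\le V(4B)^{1/2}\|S_Lm\|_2\lesssim V(4B)^{1/2}\|m\|_2\lesssim1$. On a far annulus $U_k(B)$, $k\ge2$, write $m=L^Mb$ so that $(t^mL)e^{-t^mL}m=t^{-mM}(t^mL)^{M+1}e^{-t^mL}b$, decompose $b=\sum_{i\ge0}b\,\chi_{U_i(B)}$, and estimate the contribution of each $i$ to $S_Lm$ on $U_k(B)$ by combining the off-diagonal decay of $(t^mL)^{M+1}e^{-t^mL}$ between $U_k(B)$ and $U_i(B)$ with the molecular bounds $\|(r_B^mL)^{\ell}b\|_{L^2(U_i(B))}\le2^{-i\varepsilon}r_B^{mM}V(2^iB)^{-1/2}$ (the lower-order bounds $\ell<M$ absorb the factor $t^{-mM}$ when $t\le r_B$, the case $\ell=M$ when $t\ge r_B$). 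Carrying out the $t$-integral and the $i$-sum produces $\|S_Lm\|_{L^1(U_k(B))}\le C\,2^{-k\delta}$ for some $\delta=\delta(\varepsilon,M,n,m)>0$, and summing in $k$ completes this inclusion.

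For the reverse inclusion $\HSL\hookrightarrow\HML$ I would use the tent-space machinery of Coifman--Meyer--Stein. Given $f\in H^1_L(X)\cap L^2(X)$, the function $Q_\bullet f\colon(x,t)\mapsto(t^mL)e^{-t^mL}f(x)$ on $X\times(0,\infty)$ lies in the tent space $T^{1,2}(X\times(0,\infty))$ with $\|Q_\bullet f\|_{T^{1,2}}\simeq\|S_Lf\|_1=\|f\|_{H^1_L}$, hence admits an atomic decomposition $Q_\bullet f=\sum_j\lambda_jA_j$ in $T^{1,2}$ with $\sum_j|\lambda_j|\lesssim\|f\|_{H^1_L}$, each $A_j$ supported in a tent $\widehat{B_j}$ over a ball $B_j$ and $L^2(X\times(0,\infty),\,d\mu\,dt/t)$-normalized by $V(B_j)^{-1/2}$. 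Next apply the Calder\'on reproducing formula: with $\Psi(\lambda)=c_M\lambda^{M+1}e^{-\lambda}$ normalized so that $\int_0^\infty\Psi(t^mL)Q_tf\,\frac{dt}{t}=f$ for all $f\in L^2(X)$ (legitimate since $N(L)=\{0\}$ under \eqref{GE}), one obtains $f=\sum_j\lambda_j\,\pi_\Psi(A_j)$ in $L^2(X)$, where $\pi_\Psi(A):=\int_0^\infty\Psi(t^mL)A(\cdot,t)\,\frac{dt}{t}$. Writing $\pi_\Psi(A)=L^M\tilde b$ with $\tilde b=c_M\int_0^\infty t^{mM}(t^mL)e^{-t^mL}A(\cdot,t)\,\frac{dt}{t}$, the remaining point is to check that $\pi_\Psi(A)$ is, up to a fixed multiple $C(n,m,M)$, a $(1,2,M,\varepsilon)$-molecule adapted to $B_j$: for $k<M$ and $i\ge0$ one estimates $\|(r_{B_j}^mL)^k\tilde b\|_{L^2(U_i(B_j))}$ using that $A$ is supported in $\widehat{B_j}$ (so $t\le r_{B_j}$ there), the $L^2$-normalization of the atom, and the off-diagonal decay of $(t^mL)^{k+1}e^{-t^mL}$ between $B_j$ and $U_i(B_j)$, while the top-order bound ($k=M$) is obtained more directly from a quadratic estimate for $\pi_\Psi$ (giving $\|\pi_\Psi(A)\|_{L^2(X)}\lesssim\|A\|_{L^2(X\times(0,\infty))}\le V(B_j)^{-1/2}$) together with the same off-diagonal decay on the far annuli. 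This produces the bounds of Definition~\ref{def2.7} with the prescribed $2^{-i\varepsilon}r_{B_j}^{mM}V(2^iB_j)^{-1/2}$, whence $\|f\|_{\HML}\le\sum_j|\lambda_j|\,\|\pi_\Psi(A_j)\|_{\HML}\lesssim\|f\|_{H^1_L}$; density of $H^1_L\cap L^2$ finishes the proof.

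The step I expect to be the main obstacle is this last verification --- that $\pi_\Psi$ sends $T^{1,2}$-atoms to uniform multiples of $(1,2,M,\varepsilon)$-molecules. This is where the hypotheses are genuinely used: \eqref{GE} through the off-diagonal $L^2$ bounds for $\Psi(t^mL)$ and its factors, the doubling property through the comparison of $V(2^iB_j)$ across annuli, and $M\ge1$ to have enough vanishing of $\Psi$ at the origin to make the small-$t$ part of the $t$-integral converge and to produce the full set of annular estimates. One must also treat carefully the top-order ($k=M$) estimate by the separate quadratic-estimate route indicated above, the $L^2$-convergence of the vector-valued integrals defining $\pi_\Psi$ and their interchange with the sum over $j$, and the bookkeeping ensuring every implicit constant depends only on $M$, $m$ and $n$ (the Gaussian constants $c,C$ being treated as absolute). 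By comparison, the inclusion $\HML\hookrightarrow\HSL$ is relatively routine.
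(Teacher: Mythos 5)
The paper does not give its own proof of this lemma; it refers the reader to Section~3 of \cite{DL}, and your proposal reconstructs exactly that argument: the molecule-to-square-function direction via $L^2$ off-diagonal estimates and annular decomposition, and the reverse direction via the tent-space atomic decomposition of $(t^mL)e^{-t^mL}f$, the Calder\'on reproducing formula with $\Psi(\lambda)=c_M\lambda^{M+1}e^{-\lambda}$, and the verification that $\pi_\Psi$ maps $T^{1,2}$-atoms to uniform multiples of $(1,2,M,\varepsilon)$-molecules. This is correct and is essentially the same route as in the cited reference (and the closely related treatments in \cite{HLMMY, HM, KU}).
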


We have the following dual result.
\begin{lemma}\label{th77.55}
Assume that the operator $L$ satisfies $m$-th order Davies-Gaffney  estimates~\eqref{DG} with $m\geq 2$.   Then  for $1<p<\infty$,  we have
$$
(H^p_{L}(X))^{\ast}=H^{p'}_{L}(X),
$$
where $p'$ is the conjugate index of $p$ such that $1/p'+1/p=1$.
\end{lemma}

 Similar to the classical Hardy spaces, Hardy spaces associated with operators
form a complex interpolation scale.
Let $[\cdot, \, \cdot]_{\theta}$ stand  for the complex interpolation bracket. Then we have the following result.

\begin{lemma}\label{th7.55}
Assume that the operator $L$ satisfies $m$-th order Davies-Gaffney  estimates~\eqref{DG} with $m\geq 2$.
 Then  for every  $0<\theta<1$ and $1<p_0<\infty$,  we have
$$
[ H^1_L,   H_L^{p_0} ]_{\theta} =H_L^{p}, \ \ \ \  \  {1\over p}=(1-\theta) +{\theta\over p_0},
$$
\end{lemma}

\begin{proof}
The proof can be verified that by viewing these spaces via the framework of tent spaces
and by using the interpolation properties  of tent spaces (see for example,  \cite[Lemma 4.20]{KU}).
\end{proof}

 \medskip
\subsection{Spectral multipliers  on the Hardy space.}
 The following result is a standard known result in the theory of spectral multipliers of non-negative self-adjoint
operators.

\begin{prop}\label{prop2.7}
Let $m\geq 2$. Suppose that $(X, d, \mu)$ is a space of homogeneous type with a dimension $n$.
Assume that the operator $L$ satisfies the $m$-th order Davies-Gaffney  estimates~\eqref{DG} with $m\geq 2$.
Assume in addition that $F$ is  an even bounded Borel function  such that
 $
\sup_{R>0}\|\eta\delta_RF\|_{C^\alpha}<\infty
 $
for some integer $\alpha> (n+1)/2  $ and some non-trivial function $\eta\in C_c^{\infty}(0, \infty)$.
Then the operator  $F(L)$ is bounded on $H^{1}_L(X)$,
\begin{eqnarray}\label{n}
\|F({L})\|_{H^1_L(X)\to  H^1_L(X)}\leq C \left(\sup_{R>0}\|\eta\delta_RF\|_{C^\alpha}+F(0)\right).
\end{eqnarray}
\end{prop}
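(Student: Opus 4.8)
The plan is to use the square-function definition of $H^1_L(X)$ together with the molecular description of Lemma~\ref{prop2.10}, and to reduce the statement to a single uniform estimate $\|S_L(F(L)m)\|_{L^1(X)}\le C\|F\|_\ast$ for an arbitrary molecule $m$, where $\|F\|_\ast:=\sup_{R>0}\|\eta\delta_RF\|_{C^\alpha}$. \emph{Reductions.} By Lemma~\ref{prop2.10}, $H^1_L(X)=\HML$ with comparable norms for every $\varepsilon>0$ and every $M\in\mathbb N$, $M\ge1$; fix $\varepsilon>0$ and take $M$ large (depending only on $n$ and $m$, to be specified below). Writing $F=(F-F(0))+F(0)$ and noting that $F(0)\,I$ is bounded on $H^1_L(X)$ with norm $|F(0)|$, while $\sup_{R>0}\|\eta\delta_R(F-F(0))\|_{C^\alpha}\le\|F\|_\ast+C|F(0)|$, we may assume $F(0)=0$. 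A routine comparison of cut-off functions (using that $\eta$ is non-trivial) gives $\|F\|_{L^\infty(0,\infty)}\le C\|F\|_\ast$, so $F(L)$ is bounded on $L^2(X)$ with $\|F(L)\|_{2\to2}\le C\|F\|_\ast$. Since finite linear combinations of $(1,2,M,\varepsilon)$-molecules are dense in $H^1_L(X)$, it suffices to show that for every $(1,2,M,\varepsilon)$-molecule $m=L^Mb$ associated with a ball $B=B(x_B,r_B)$ one has $\|S_L(F(L)m)\|_{L^1(X)}\le C\|F\|_\ast$, with $C$ independent of $m$.

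\emph{The near part.} From $\|S_Lg\|_{L^2(X)}\le C\|g\|_{L^2(X)}$, the $L^2$-bound on $F(L)$, and the molecular bound $\|m\|_{L^2(X)}\le\sum_{j\ge0}\|m\|_{L^2(U_j(B))}\le\sum_{j\ge0}2^{-j\varepsilon}V(2^jB)^{-1/2}\le CV(B)^{-1/2}$, Cauchy--Schwarz and the doubling property give
\begin{equation*}
\int_{4B}S_L(F(L)m)\,d\mu\le V(4B)^{1/2}\big\|S_L(F(L)m)\big\|_{L^2(X)}\le C\,V(B)^{1/2}\|F(L)m\|_{L^2(X)}\le C\,V(B)^{1/2}\,\|F\|_\ast\,V(B)^{-1/2}=C\|F\|_\ast .
\end{equation*}

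\emph{The far part and the main obstacle.} For $x\in U_i(B)$ with $i\ge3$, write $(t^mLe^{-t^mL})F(L)=\Psi_t(L)$ with $\Psi_t(\lambda)=(t^m\lambda)e^{-t^m\lambda}F(\lambda)$, and split the $t$-integral in $S_L(F(L)m)(x)$ at $t=2^{i-2}r_B$. One has $\|\Psi_t\|_{L^\infty}\le C\|F\|_\ast$, and since $\Psi_t$ is concentrated near $\lambda\sim t^{-m}$ with exponential decay and inherits the rescaled $C^\alpha$-bound of $F$, the Gaussian bound \eqref{GE} — through the Phragm\'en--Lindel\"of/Stein--Tomas type argument (cf. Section~3 and \cite{DOS}) — yields the weighted off-diagonal estimate, for all $y$,
\begin{equation*}
\Big(\int_X\big|K_{\Psi_t(L)}(x,y)\big|^2\,V(x,t)\,\big(1+t^{-1}d(x,y)\big)^{2\alpha}\,d\mu(x)\Big)^{1/2}\le C\,\|F\|_\ast .
\end{equation*}
For $t\le2^{i-2}r_B$ the ball $B(x,t)$ is disjoint from $2^{i-2}B$, so pairing this estimate with the molecular bounds $\|m\|_{L^2(U_j(B))}\le2^{-j\varepsilon}V(2^jB)^{-1/2}$ and integrating in $t$ and then over $U_i(B)$ bounds this part of $\|S_L(F(L)m)\|_{L^1}$ by $C\,2^{-i(\alpha-n/2)}\|F\|_\ast$ — this is precisely where $\alpha>n/2$ is used. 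For $t>2^{i-2}r_B$ we exploit the cancellation $m=L^Mb$: $\Psi_t(L)m=t^{-mM}\big((t^m\lambda)^M\Psi_t\big)(L)b$ with $(t^m\lambda)^M\Psi_t$ again bounded by $C\|F\|_\ast$, hence $\|\Psi_t(L)m\|_{L^2(X)}\le C(r_B/t)^{mM}\|F\|_\ast V(B)^{-1/2}$, and integrating as before bounds this part by $C\,2^{-i(mM-n/2)}\|F\|_\ast$, which is summable once $M$ is chosen with $mM>n/2$. Summing over $i\ge3$ and adding the near part gives $\|S_L(F(L)m)\|_{L^1(X)}\le C\|F\|_\ast$, hence \eqref{n}. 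The genuinely technical point — and the main obstacle — is establishing the weighted off-diagonal estimate for $\Psi_t(L)$ with the sharp exponent $\alpha$, uniformly in $t$: this is where both \eqref{GE} (via Phragm\'en--Lindel\"of) and the hypothesis $\alpha>n/2$ enter decisively, together with the careful bookkeeping of the volume factors $V(2^jB)$ and $V(x,t)$; the remaining manipulations are routine.
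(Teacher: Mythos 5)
The paper does not prove Proposition~\ref{prop2.7} itself; it records it as a known spectral-multiplier theorem and simply cites \cite[Theorem~1.4]{KU} and \cite[Theorem~1.1]{DY3}. Your overall scheme --- reduce to a single $(1,2,M,\varepsilon)$-molecule, control $\int_{4B}S_L(F(L)m)\,d\mu$ by the $L^2$-boundedness of $S_L$ and $F(L)$, split the far field at $t\sim 2^{i}r_B$, use an off-diagonal $L^2$ estimate for small $t$ and the algebraic cancellation $m=L^Mb$ for large $t$ --- is exactly the architecture of those references, and the reduction to $F(0)=0$, the near-part bound, and the large-$t$ estimate (giving $2^{-i(mM-n/2)}$, summable once $mM>n/2$) are all sound as written.

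The genuine gap is the step you yourself flag as the main obstacle, and it cannot be dismissed as bookkeeping. You assert a weighted kernel bound with decay exponent $\alpha$ in the weight $(1+t^{-1}d(x,y))^{2\alpha}$, citing ``Section~3 and \cite{DOS}'' and a ``Stein--Tomas type argument''. But the only off-diagonal estimate this paper actually derives from \eqref{GE} is Proposition~\ref{le2.3}, and it is controlled by the Besov-type norm $\|\delta_RG\|_{\mathbf B^{s}}=\int|\widehat{\delta_RG}(\tau)|(1+|\tau|)^{s}\,d\tau$, not by a $C^\alpha$ norm. For a dyadic piece $G=\phi\cdot\delta_{2^\ell}F$ with only $\sup_R\|\eta\delta_RF\|_{C^\alpha}<\infty$, Cauchy--Schwarz gives at best $\|G\|_{\mathbf B^{s}}<\infty$ for $s<\alpha-1/2$, so the off-diagonal decay rate obtainable from Proposition~\ref{le2.3} is strictly less than $\alpha-1/2$, not $\alpha$. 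Your far-field summation over $i$ would then require $\alpha>n/2+1/2$, which is strictly stronger than the hypothesis $\alpha>n/2$ in the case $n$ odd and $\alpha=(n+1)/2$ (the smallest admissible integer). The ``Stein--Tomas'' attribution is also misplaced: such restriction/Plancherel estimates require hypotheses (restriction-type conditions, absolutely continuous spectral measure) that are neither assumed here nor implied by \eqref{GE}. To reach the sharp threshold $\alpha>n/2$ one needs a finer off-diagonal lemma than Proposition~\ref{le2.3}, one controlled by a Sobolev norm $\|\delta_R G\|_{W^{\alpha,2}}$ rather than the Besov norm $\|\delta_RG\|_{\mathbf B^{s}}$; that is exactly what \cite{KU} and \cite{DY3} supply, and why the paper cites them rather than re-proving the proposition from the machinery of Section~3.
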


\begin{proof}
For the proof,  see for example, \cite[Theorem 1.4]{KU} and \cite[Theorem 1.1]{DY3}.
\end{proof}

\bigskip

\section{Off-diagonal bounds for compactly  supported spectral multipliers  }
\setcounter{equation}{0}

Let us start with stating the Phragm\'en-Lindel\"of Theorem
 for sectors in the complex plane $\mathbb C$. For its proof, we refer to \cite[Lemma 4.2]{SW}.

\begin{thm}\label{th2.1} Let $S$ be the open region in ${\mathbb C}$ bounded by two rays meeting at an angle $\pi/a$ for some $a>1/2$.
Suppose that $F$ is analytic on $S$, continuous on ${\bar S}$ and satisfies $|F(z)|\leq C\exp (c|z|^{b})$ for some $b\in [0, a)$
and for all $z\in S$. Then the condition $|F(z)|\leq B$
 on the two bounding rays implies that $|F(z)|\leq B$ for all $z\in S$.
 \end{thm}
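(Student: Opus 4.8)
The plan is to reduce the sector to a half-plane by a conformal power map, and then to apply the classical Phragmén–Lindelöf principle on a half-plane, which in turn is reduced to the maximum modulus principle on a bounded region by damping with a subexponential Gaussian-type factor. First I would, after a rotation, assume the sector $S$ is symmetric about the positive real axis, so $S = \{z = \rho e^{i\varphi} : \rho > 0,\ |\varphi| < \pi/(2a)\}$ for the given $a > 1/2$. The map $w = z^{a}$ (using the branch of the power that is holomorphic on $S$) sends $S$ conformally onto the right half-plane $H = \{\operatorname{Re} w > 0\}$, with the two bounding rays going to the imaginary axis. Setting $G(w) = F(w^{1/a})$, the hypothesis $|F(z)| \le C\exp(c|z|^{b})$ becomes $|G(w)| \le C\exp(c|w|^{b/a})$, and since $b < a$ we have $b/a < 1$; on the boundary $\partial H$ we still have $|G| \le B$. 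So it suffices to prove: if $G$ is analytic on $H$, continuous on $\overline{H}$, bounded on $\partial H$ by $B$, and satisfies $|G(w)| \le C\exp(c|w|^{\gamma})$ for some $\gamma \in [0,1)$, then $|G| \le B$ throughout $H$.

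For this half-plane statement I would fix $\gamma < \beta < 1$ and, for $\delta > 0$, introduce the auxiliary function
\[
  G_\delta(w) = G(w)\,\exp\!\big(-\delta\, w^{\beta}\big),
\]
where $w^{\beta}$ uses the principal branch, holomorphic on $H$. On the sector $H$ one has $\operatorname{Re}(w^{\beta}) = |w|^{\beta}\cos(\beta\arg w) \ge |w|^{\beta}\cos(\beta\pi/2) =: c_\beta |w|^{\beta}$ with $c_\beta > 0$ because $\beta < 1$. Hence $|G_\delta(w)| \le C\exp(c|w|^{\gamma} - \delta c_\beta |w|^{\beta})$, which tends to $0$ as $|w| \to \infty$ in $\overline{H}$ since $\beta > \gamma$; and on $\partial H$ we have $|G_\delta| \le |G| \le B$ (as $\operatorname{Re}(w^\beta)\ge 0$ there). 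Applying the maximum modulus principle on the region $H \cap \{|w| < R\}$ for $R$ large — where the outer arc contributes at most $\varepsilon < B$ by the decay just noted — yields $|G_\delta(w)| \le B$ for all $w \in H$. Letting $\delta \to 0^{+}$ gives $|G(w)| \le B$ on $H$, and transporting back through $w = z^a$ gives $|F(z)| \le B$ on $S$.

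The only genuinely delicate point is the bookkeeping with branches of the power functions: one must check that $z \mapsto z^{a}$ and $w \mapsto w^{\beta}$ are well-defined and holomorphic on the relevant sectors (the half-opening angles $\pi/(2a)$ and $\pi/2$ are both $< \pi$, so a holomorphic branch of each power exists), and that the inequality $\operatorname{Re}(w^\beta) \ge c_\beta |w|^\beta$ holds uniformly on the closed right half-plane. Everything else — the growth bookkeeping under the substitution, the application of the maximum principle on the truncated sector, and the limit $\delta \to 0$ — is routine. Since the paper cites \cite[Lemma 4.2]{SW} for this statement, I would present the argument only in outline along these lines, or simply invoke that reference.
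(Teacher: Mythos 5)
The paper does not prove this theorem but simply cites \cite[Lemma 4.2]{SW} for it. Your argument — conformal reduction of the sector to a half-plane via $z\mapsto z^{a}$, followed by the standard damping trick $G_\delta(w)=G(w)\exp(-\delta w^\beta)$ with $\gamma<\beta<1$ and the maximum modulus principle on a truncated region — is correct and is essentially the textbook proof that the cited reference gives.
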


  The following result is a   consequence of
 Theorem~\ref{th2.1}.

 \begin{lemma}\label{le2.22}
\label{tw0} Suppose that $F$ is an analytic function on $\mathbb{C}_+=\{z\in{\mathbb C}:  {{\rm Re }z}>0 \}$, the open
right half-plane.
Assume that, for given numbers $M_1, M_2, \gamma>0$, $0<\alpha\leq 1$,
\begin{equation}  \label{a1}
|F(z)|\le M_1, \quad \forall \, z \in \mathbb{C}_+
\end{equation}
and
\begin{equation}  \label{a2}
|F(t)|\le M_2 \exp\Big({-\frac{\gamma}{t^\alpha}}\Big), \quad \forall \, t \in \mathbb{R}_+.
\end{equation}
Then for every $ z \in \mathbb{C}_+$,
\begin{equation}  \label{bb2}
|F(z)| \le \max\big\{ M_1 ,M_2 \big\}\exp \left(-\alpha \gamma \, { {{\rm Re }z} \over |z|^{\alpha+1} }\right).
\end{equation}
\end{lemma}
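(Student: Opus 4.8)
The plan is to apply the Phragmén–Lindelöf theorem (Theorem~\ref{th2.1}) on the right half-plane $\mathbb{C}_+$, which is the sector of opening angle $\pi$, i.e.\ $a=1$ in the notation of that theorem. The difficulty is that the hypothesis \eqref{a2} gives decay only along the positive real ray, while on the whole boundary $i\mathbb{R}$ we only have the crude bound $M_1$ from \eqref{a1}; so a direct application to $F$ itself merely recovers $|F(z)|\le M_1$. The standard trick is to multiply $F$ by an auxiliary analytic function that is bounded by $1$ on all of $\mathbb{C}_+$, reduces to $\exp(-\gamma/t^\alpha)$ on the positive real axis, and whose modulus on a general $z\in\mathbb{C}_+$ is exactly the factor appearing in \eqref{bb2}. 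Concretely I would introduce the branch $z^{-\alpha}=\exp(-\alpha\log z)$ with $\log$ the principal branch (so $\arg z\in(-\pi/2,\pi/2)$ on $\mathbb{C}_+$), and set
\begin{equation*}
G(z)=F(z)\,\exp\!\big(\alpha\gamma\, z^{-\alpha}\big).
\end{equation*}
Since $\operatorname{Re}(z^{-\alpha})=|z|^{-\alpha}\cos(\alpha\arg z)\ge 0$ for $|\arg z|<\pi/2$ and $0<\alpha\le 1$ (here $\alpha|\arg z|<\pi/2$), the exponential factor has modulus $\ge 1$ throughout $\mathbb{C}_+$; this is what makes the final estimate nontrivial, but it also means $G$ could a priori be large near $\partial\mathbb{C}_+$, so I must be a little careful.

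To handle the boundary I would not work with $G$ directly but rather first fix $z_0\in\mathbb{C}_+$ and consider, for small $\varepsilon>0$, the function $z\mapsto F(z)\exp(\alpha\gamma z^{-\alpha})(1+\varepsilon z)^{-\beta}$ for a suitable $\beta>0$, or equivalently rotate the sector slightly: replace $\mathbb{C}_+$ by a slightly smaller sector $S_\theta=\{|\arg z|<\theta\}$ with $\theta<\pi/2$ close to $\pi/2$. On $S_\theta$ one has $\operatorname{Re}(z^{-\alpha})=|z|^{-\alpha}\cos(\alpha\arg z)\le |z|^{-\alpha}$, so on the two bounding rays $\arg z=\pm\theta$ the extra factor $\exp(\alpha\gamma z^{-\alpha})$ is bounded (its modulus $\le \exp(\alpha\gamma|z|^{-\alpha})$, which is $\le$ some constant for $|z|$ bounded below and $\to 1$ as $|z|\to\infty$), hence $|G|\le C_\theta M_1$ there; and $G$ is bounded on $S_\theta$ because $|F|\le M_1$ and $\alpha\gamma z^{-\alpha}\to 0$ as $|z|\to\infty$ while near $0$ we can excise a small disc or use that $F$ is merely required analytic, taking a limiting argument. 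Since $G$ is analytic on $S_\theta$, continuous up to the closure, of at most exponential growth of order $<a=\pi/(2\theta)$ (indeed $G$ is bounded, so $b=0$ works), Theorem~\ref{th2.1} gives $|G(z)|\le \max\{C_\theta M_1,\ ?\}$ on $S_\theta$. On the positive real ray, which lies inside every $S_\theta$, hypothesis \eqref{a2} gives $|G(t)|=|F(t)|\exp(\alpha\gamma t^{-\alpha})\le M_2$, so in fact the relevant boundary bound is $\max\{M_1,M_2\}$ up to the harmless constant $C_\theta$.

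Finally I would let $\theta\uparrow\pi/2$ (and remove any auxiliary regularizing factor, $\varepsilon\downarrow 0$): the constants $C_\theta$ tend to $1$, and one obtains $|G(z)|\le\max\{M_1,M_2\}$ for all $z\in\mathbb{C}_+$. Unwinding the definition of $G$, for $z\in\mathbb{C}_+$
\begin{equation*}
|F(z)|=|G(z)|\,\big|\exp(-\alpha\gamma z^{-\alpha})\big|
=|G(z)|\exp\!\big(-\alpha\gamma\,\operatorname{Re}(z^{-\alpha})\big)
=|G(z)|\exp\!\Big(-\alpha\gamma\,\frac{\cos(\alpha\arg z)}{|z|^{\alpha}}\Big),
\end{equation*}
and since $\alpha\le 1$ we have $\cos(\alpha\arg z)\ge\cos(\arg z)=\operatorname{Re}z/|z|$, whence $\operatorname{Re}(z^{-\alpha})\ge \operatorname{Re}z/|z|^{\alpha+1}$ and therefore
\begin{equation*}
|F(z)|\le \max\{M_1,M_2\}\exp\!\Big(-\alpha\gamma\,\frac{\operatorname{Re}z}{|z|^{\alpha+1}}\Big),
\end{equation*}
which is \eqref{bb2}. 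The main obstacle is the care needed to legitimately apply Phragmén–Lindelöf: controlling $G$ near the origin and near the boundary rays $i\mathbb{R}$, which is precisely why one passes to the slightly narrower sectors $S_\theta$ and only at the end takes $\theta\to\pi/2$; everything else is a routine computation with the principal branch of $z^{-\alpha}$.
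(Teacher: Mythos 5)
There is a genuine gap in the passage from hypotheses to the bound $|G|\le\max\{M_1,M_2\}$. Your auxiliary factor $\exp(\alpha\gamma z^{-\alpha})$ has modulus $\exp\big(\alpha\gamma|z|^{-\alpha}\cos(\alpha\arg z)\big)$, and since $|\alpha\arg z|<\alpha\pi/2\le\pi/2$ for $z\in\mathbb C_+$, the cosine is bounded \emph{below} by $\cos(\alpha\pi/2)>0$ when $\alpha<1$; hence $|G(z)|\to\infty$ as $z\to 0$ along \emph{every} ray $\arg z=\pm\theta$ with $\theta<\pi/2$. Consequently $G$ is not continuous on $\bar S_\theta$ (it is not even bounded there), the hypothesis of Theorem~\ref{th2.1} fails, and the claim ``$|G|\le C_\theta M_1$ on the bounding rays'' is false unless $C_\theta$ is allowed to depend on a lower bound for $|z|$. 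Excising a small disc of radius $\delta$ around the origin does not repair this: on the excised arc and on the portions of the rays with $|z|\asymp\delta$ the bound $M_1\exp\big(\alpha\gamma\delta^{-\alpha}\cos(\alpha\theta)\big)$ blows up as $\delta\downarrow 0$, so the ``limiting argument'' you invoke does not converge to $\max\{M_1,M_2\}$. The issue is structural: in the $z$-variable the singularity of the auxiliary factor sits at the finite boundary point $0$, and the Phragm\'en--Lindel\"of theorem in the form of Theorem~\ref{th2.1} only allows a controlled growth clause at infinity, not at a finite boundary point.

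The paper circumvents this by two devices you are missing. First, it substitutes $\zeta=1/z$ and works with $u_\pm(\zeta)=F(1/\zeta)\exp\big(\gamma e^{\pm i(\pi/2-\pi\alpha/2)}\zeta^\alpha\big)$: the problematic point $z=0$ is sent to $\zeta=\infty$, where the auxiliary factor grows only like $\exp(\gamma|\zeta|^\alpha)$, i.e.\ of order $\alpha\le 1 <2=a$ on the quarter-plane, which is exactly what the growth clause of Phragm\'en--Lindel\"of permits. Second, the rotation $e^{\pm i(\pi/2-\pi\alpha/2)}$ is chosen so that on the imaginary axis in $\zeta$ the exponent is purely imaginary, making the auxiliary factor have modulus exactly $1$ there; this yields the clean boundary bound $M_1$ (and the paper's $\varepsilon$-shift of the vertex produces only a harmless $e^{\gamma\varepsilon^\alpha}\to 1$ factor). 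Your unrotated factor $\exp(\alpha\gamma z^{-\alpha})$ has no bounding ray on which it is controlled, and no $\varepsilon$-regularization of the type you sketch can compensate. Your final algebraic step (using $\cos(\alpha\arg z)\ge\cos(\arg z)=\mathrm{Re}\,z/|z|$ for $0<\alpha\le 1$) is correct; it is the intermediate maximum-principle bound on $G$ that is unproven and, as your auxiliary function stands, unprovable by Theorem~\ref{th2.1}. To salvage the proposal you would need either to perform the inversion $\zeta=1/z$ (and then you essentially reproduce the paper's argument), or to insert the rotation and split $\mathbb C_+$ into the two quarter-planes exactly as the paper does.
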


\begin{proof}  Lemma~\ref{le2.22} was proved in  \cite[Lemma 9]{D2}. See also  \cite[Proposition 2.2]{CouS}
 and \cite[Lemma 6.18]{O}.
   We give a brief argument
of this proof for completeness and convenience for the reader.

Consider the function
\begin{equation}  \label{gg1}
u_+(\zeta):=F\left(\frac{1}{\zeta}\right)\exp\left(\gamma e^{i(\pi/2-\pi \alpha/2)}{\zeta^\alpha}\right),
\end{equation}
which is also defined on $\mathbb{C}_+$.
By (\ref{a1}),
\begin{equation*}
|u_+(\zeta)|\le M_1 \exp\left(\gamma|\zeta|^\alpha\right), \quad \forall \, \zeta \in \mathbb{C}_+ .
\end{equation*}
Again by (\ref{a1}) we have, for any $\varepsilon > 0$ and $\zeta=\varepsilon+iy=:C_\varepsilon e^{i\theta_\varepsilon}$,
\begin{eqnarray*}
|u_+(\zeta)|&=&\left|F\left(\frac{1}{\zeta}\right)\exp\left(\gamma e^{i(\pi/2-\pi \alpha/2)}{\zeta^\alpha}\right)\right|
 \leq  M_1\exp\left( C_\varepsilon^\alpha \gamma\sin\Big(\frac{\pi \alpha}{2}-\alpha\theta_\varepsilon\Big) \right).
\end{eqnarray*}
For $y\geq 0$, it follows from $0<\alpha\leq 1$ that
\begin{eqnarray*}
|u_+(\zeta)|
&\leq& M_1\exp\left( C_\varepsilon^\alpha \gamma\sin\Big(\frac{\pi \alpha}{2}-\alpha\theta_\varepsilon\Big) \right)
 \leq  M_1\exp\left(  \gamma\varepsilon^\alpha \right),
\end{eqnarray*}
which implies that
\begin{equation}  \label{22a}
\qquad \sup_{{\mbox{\Small{\rm Re}}}\zeta=\varepsilon, {\mbox\Small{\rm Im}}\zeta\geq 0}|u_+(\zeta)|\le
M_1 e^{\gamma \varepsilon^\alpha}.
\end{equation}
By (\ref{a2}),
\begin{equation}  \label{22b}
\sup_{ \zeta\in [\varepsilon,\infty)}|u_+(\zeta)|\le M_2.
\end{equation}
Hence, by Phragm\'en-Lindel\"of theorem~\ref{th2.1} with angle $\pi/2$ and $b=\alpha$,
applied to
\begin{equation*}
S^+_{\varepsilon}= \{z\in \mathbb{C} \colon \, {\mbox{\Small{\rm Re}}} z >
\varepsilon \quad \mbox{and} \quad {\mbox{\small{\rm Im}}} z > 0\},
\end{equation*}
we  obtain
\begin{equation*}
\sup_{{\mbox{\Small{\rm Re}}}\zeta\geq\varepsilon, {\mbox\Small{\rm Im}}\zeta\geq 0}|u_+(\zeta)|\le
\max\{ M_2 , M_1 e^{\gamma \varepsilon^a} \}, \quad \forall \,
\varepsilon > 0 .
\end{equation*}

Next  we consider the function
\begin{equation}  \label{ggg2}
u_-(\zeta):=F\left(\frac{1}{\zeta}\right)\exp\left(\gamma e^{i(-\pi/2+\pi \alpha/2)}{\zeta^\alpha}\right).
\end{equation}
A similar argument  shows that
\begin{equation*}
\sup_{{\mbox{\Small{\rm Re}}}\zeta\geq\varepsilon,\,  {\mbox\Small{\rm Im}}\zeta\leq 0}|u_-(\zeta)|\le
\max\{M_2 , M_1e^{\gamma \varepsilon^a} \}, \quad \forall \,
\varepsilon > 0 .
\end{equation*}
Letting $\varepsilon \to 0$ we obtain
\begin{equation*}
\sup_{{\mbox{\Small{\rm Re}}}\zeta>0, \, {\mbox\Small{\rm Im}}\zeta\geq 0}|u_+(\zeta)|\le
\max\{ M_1 ,M_2 \}
\end{equation*}
and
\begin{equation*}
\sup_{{\mbox{\Small{\rm Re}}}\zeta>0, \,  {\mbox\Small{\rm Im}}\zeta\leq 0}|u_-(\zeta)|\le
\max\{ M_1, M_2\}.
\end{equation*}
Putting $\zeta =\frac{1}{z}$, we obtain for all $\mbox{Re}\,z>0$
\begin{equation*}
|F(z)|\le
\max\{ M_1, M_2\}\exp\left(- \alpha\sin\left(\pi/2+|\theta_z|\right) {\gamma \over |z|^{\alpha}}\right),
\end{equation*}
where $\theta_z=\arg z$. From this, (\ref{bb2}) follows readily.
 \end{proof}

\begin{lemma}\label{le2.2}
Suppose that  $L$ satisfies the $m$-th order Davies-Gaffney  estimates~\eqref{DG} with $m\geq 2$.
There exist  two positive constants  $C$ and $c$ such that
for every $j=2,3,\ldots$
 \begin{eqnarray*}
\|P_{B}   e^{(i\tau-1)R^{-1}L} P_{U_j(B)} \|_{2\to 2}&\leq&
C
  \exp\left(-c\left({ \sqrt[m]{R} 2^j r \over    \sqrt{1+\tau^2}}\right)^{\frac{m}{m-1}}   \right)
\end{eqnarray*}
for all  balls $B\subseteq  X$.
 \end{lemma}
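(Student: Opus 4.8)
The plan is to derive from the pointwise Gaussian bound \eqref{GE} an $L^2$ off-diagonal (Davies--Gaffney type) estimate for the real-time semigroup $e^{-tL}$, and then to propagate it into the complex right half-plane using the Phragm\'en--Lindel\"of estimate of Lemma~\ref{le2.22}. Throughout, $r=r_B$ denotes the radius of $B$.

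\emph{Step 1: an $L^2$ off-diagonal bound.} First I would record the standard fact that \eqref{GE} together with the doubling property \eqref{eq2.2} yields constants $C,c>0$ such that, for all open sets $U_1,U_2\subseteq X$ and all $t>0$,
\begin{equation*}
\|P_{U_1}e^{-tL}P_{U_2}\|_{2\to 2}\le C\exp\left(-c\left(\frac{{\rm dist}(U_1,U_2)^m}{t}\right)^{\frac{1}{m-1}}\right).
\end{equation*}
To see this, write $e^{-tL}$ as the integral operator with kernel $p_t(x,y)$ restricted to $U_1\times U_2$; on this set $d(x,y)\ge\rho:={\rm dist}(U_1,U_2)$, so one splits the Gaussian factor in \eqref{GE} as a product of $\exp(-\tfrac{c}{2}(\rho^m/t)^{1/(m-1)})$ and $\exp(-\tfrac{c}{2}(d(x,y)^m/t)^{1/(m-1)})$, and applies the Schur test. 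The two Schur integrals $\int_X V(\cdot\,,t^{1/m})^{-1}\exp(-\tfrac{c}{2}(d(x,y)^m/t)^{1/(m-1)})\,d\mu$ are bounded uniformly in the remaining variable and in $t$: one decomposes $X$ into the annuli where $d(x,y)\sim 2^k t^{1/m}$, uses \eqref{eq2.2} to compare $V(x,t^{1/m})$ and $V(y,t^{1/m})$ on each annulus, and sums the resulting geometric-times-Gaussian series in $k$.

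\emph{Step 2: propagation by Phragm\'en--Lindel\"of.} Fix $f\in L^2(X)$ supported in $U_j(B)$ and $g\in L^2(X)$ supported in $B$, with $\|f\|_2=\|g\|_2=1$, and set $F(z):=\langle e^{-zL}f,g\rangle$ for $\Re z>0$. Since $L$ is non-negative self-adjoint, the spectral theorem gives that $z\mapsto e^{-zL}$ is analytic on $\mathbb{C}_+$ with $\|e^{-zL}\|_{2\to 2}\le 1$ there; hence $F$ is analytic on $\mathbb{C}_+$ and $|F(z)|\le 1=:M_1$. On $\mathbb{R}_+$, Step 1 together with ${\rm dist}(B,U_j(B))\ge 2^{j-2}r$ (valid for $j\ge 2$) gives
\begin{equation*}
|F(t)|\le C\exp\left(-c\,\frac{(2^j r)^{m/(m-1)}}{t^{1/(m-1)}}\right),\qquad t\in\mathbb{R}_+ .
\end{equation*}
Thus Lemma~\ref{le2.22} applies with $\alpha=\tfrac{1}{m-1}\in(0,1]$ (as $m\ge 2$), $M_2=C$ and $\gamma=c(2^j r)^{m/(m-1)}$, giving the bound \eqref{bb2} for $|F(z)|$ on all of $\mathbb{C}_+$.

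\emph{Step 3: specialization and bookkeeping.} It remains to evaluate \eqref{bb2} at $z=(1-i\tau)/R=-(i\tau-1)R^{-1}$, so that $e^{-zL}=e^{(i\tau-1)R^{-1}L}$, $\Re z=1/R$ and $|z|=\sqrt{1+\tau^2}/R$. Using $\alpha+1=m/(m-1)$ and $R^{1/(m-1)}=(R^{1/m})^{m/(m-1)}$, the exponent in \eqref{bb2} becomes
\begin{equation*}
\alpha\gamma\,\frac{\Re z}{|z|^{\alpha+1}}=\frac{c}{m-1}\left(\frac{\sqrt[m]{R}\,2^j r}{\sqrt{1+\tau^2}}\right)^{\frac{m}{m-1}},
\end{equation*}
and therefore $|F(z)|\le\max\{1,C\}\exp\big(-\tfrac{c}{m-1}(\sqrt[m]{R}\,2^jr/\sqrt{1+\tau^2})^{m/(m-1)}\big)$. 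Taking the supremum over all such $f$ and $g$ yields the assertion, after renaming the constants. I expect Step 1 to be the only point requiring any real care, since the volume normalization in \eqref{GE} is centred at one variable while the Schur integration runs over the other; once the $L^2$ off-diagonal estimate is in hand, Step 2 and Step 3 are a direct application of Lemma~\ref{le2.22} and elementary manipulations of exponents.
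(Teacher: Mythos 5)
Your proposal is correct and follows essentially the same route as the paper's own proof: both define $F(z)=\langle e^{-zL}f_1,f_2\rangle$ for $f_1,f_2$ supported in the two sets, observe $|F(z)|\le\|f_1\|_2\|f_2\|_2$ on $\mathbb{C}_+$, invoke an $L^2$ off-diagonal Davies--Gaffney bound on $\mathbb{R}_+$, and feed these into Lemma~\ref{le2.22} with $\alpha=1/(m-1)$ before specializing to $z=(1-i\tau)/R$. The only difference is that the paper simply asserts the $\mathbb{R}_+$ bound ``follows from the Gaussian upper bounds \eqref{GE}'' without detail, whereas you supply the Schur-test argument (splitting the Gaussian factor, using doubling to handle the one-sided normalization $V(x,t^{1/m})$); that is a welcome addition, not a deviation.
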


\begin{proof}   For any open sets $U$ and $V$, and ${\rm Re}z>0$,
 we define a function
 $$
 F(z):=\langle e^{-zL}f_1,f_2\rangle,
 $$
 where $\supp f_1\subset U$ and $\supp f_2\subset V$.
 Then $F(z)$ is an analytic function  on the complex half plain ${\rm Re} \, z>0$.  It is seen that
 $$
 |F(z)|\leq \|e^{-zL}f_1\|_{2} \|f_2\|_2\leq \|e^{-z\lambda}\|_{L^\infty}\|f_1\|_2\|f_2\|_2\leq
  \|f_1\|_2\|f_2\|_2, \quad \forall \, z \in \mathbb{C}_+
 $$
 and it follows from the $m$-th order Davies-Gaffney  estimates~\eqref{DG}  and \cite[Theorem 1.2]{BK} that
 $$
 |F(t)|\le C \exp\left(-c{ \frac{d(U,V)^{\frac{m}{m-1}}}{ t^{\frac{1}{m-1}}}}\right)\|f_1\|_2\|f_2\|_2, \quad \forall \, t \in \mathbb{R}_+.
 $$
  Let $M_1=\|f_1\|_2\|f_2\|_2$, $M_2=C\|f_1\|_2\|f_2\|_2$,
 $\gamma= cd(U,V)^{m/(m-1)} $ and $\alpha=1/(m-1)$.
We apply  Lemma~\ref{le2.22} to get
 $$
 |\langle e^{-zL}f_1,f_2\rangle|\leq C\exp\left( {-  c\, {{\rm Re} z}\, \frac{d(U,V)^{\frac{m}{m-1}} } { |z|^{\frac{1}{m-1}+1}} }\right)\|f_1\|_{2} \|f_2\|_2.
 $$
From it, we have that
 \begin{eqnarray*}
\|P_{B}   e^{(i\tau-1)R^{-1}L} P_{U_j(B)} \|_{2\to 2}&\leq&
C
  \exp\left(-c \left({ \sqrt[m]{R} 2^j r_B \over    \sqrt{1+\tau^2}}\right)^{\frac{m}{m-1}}   \right).
\end{eqnarray*}
This ends the proof of Lemma~\ref{le2.2}.
\end{proof}

Next we define a Besov type norm of $F$ by
$$
\|F\|_{{{\bf B}^{s}}}:=\int_{-\infty}^{\infty} |\widehat{F}(\tau)|(1+|\tau|)^{s}d\tau,
$$
 where ${\widehat f}$ denotes the Fourier transform of $f$.
Since for every  functions $F$ and $G$,  it can be checked that
 \begin{eqnarray*}
 \|FG\|_{{{\bf B}^{s}}}&=&\int_{-\infty}^{\infty} |\widehat{(FG)}(\tau)|(1+|\tau|)^{s}d\tau\\
&\leq & \int_{-\infty}^{\infty} \int_{-\infty}^{\infty}\big|  (\widehat{F} (\tau-\eta)   \widehat{G}(\eta) \big|
 |(1+|\tau-\eta|)^{s} |(1+|\eta|)^{s}d\eta  d\tau\\
\end{eqnarray*}
 and so by the Fubini theorem,
$$
\|FG\|_{{{\bf B}^{s}}}\leq \|F\|_{{{\bf B}^{s}}}\|G\|_{{{\bf B}^{s}}}.
$$

Finally, we can show the following result.

\begin{prop}\label{le2.3}
Suppose that  $L$ satisfies  the  Gaussian upper bounds \eqref{DG} with $m\geq 2$.
Then for every  $s\geq 0$, there exists  a constant $C>0$ such that
for every $j=2,3,\ldots$
\begin{eqnarray}\label{e3.8} \hspace{1cm}
 \big\|P_{B}    F({L}) P_{U_j(B)}\big\|_{2\to 2}\leq
 C  \big(\sqrt[m]{R}2^j r)^{-s}   \|F(R\cdot)\|_{{{\bf B}^{s}}}
\end{eqnarray}
for all  balls $B\subseteq  X$, and all Borel functions $F$  such that supp $F\subseteq [-R, R]$.
 \end{prop}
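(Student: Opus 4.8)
The plan is to reduce the estimate, via Fourier inversion, to the off-diagonal bound of Lemma~\ref{le2.2}, at the price of first replacing the undamped Schr\"odinger group (which has no off-diagonal decay) by the damped operator $e^{(i\tau-1)R^{-1}L}$. We may assume $\|F(R\cdot)\|_{{\bf B}^{s}}<\infty$, since otherwise \eqref{e3.8} is trivial; in particular $\widehat{\delta_R F}\in L^1(\mathbb R)$. Write $G:=\delta_R F$, so that $G$ is continuous, $\supp G\subseteq[-1,1]$, and $F(L)=G(L/R)$. Fix once and for all a cut-off $\chi\in C_c^\infty(\mathbb R)$ with $\chi\equiv 1$ on $[-1,1]$, put $\psi(\lambda):=\chi(\lambda)e^{\lambda}$ (so $\psi\in C_c^\infty$) and $H:=G\psi$. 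Then $H$ is continuous and compactly supported, $H(\lambda)e^{-\lambda}=G(\lambda)\chi(\lambda)=G(\lambda)$ for every $\lambda$ (as $G$ vanishes off $[-1,1]$), and by the submultiplicativity of $\|\cdot\|_{{\bf B}^{s}}$ established just above the Proposition,
\[
\|H\|_{{\bf B}^{s}}\ \le\ \|G\|_{{\bf B}^{s}}\,\|\psi\|_{{\bf B}^{s}}\ =\ C_s\,\|F(R\cdot)\|_{{\bf B}^{s}},\qquad C_s:=\|\psi\|_{{\bf B}^{s}}<\infty ,
\]
where finiteness of $C_s$ follows from $\psi\in C_c^\infty\subset\mathcal S$; in particular $\widehat H\in L^1$.

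Next I invoke the spectral theorem together with the Fourier inversion formula $H(\lambda)=(2\pi)^{-1/2}\int\widehat H(\tau)e^{i\tau\lambda}\,d\tau$. Since $F(L)=G(L/R)=[H(\cdot)e^{-\cdot}](L/R)=H(L/R)e^{-R^{-1}L}$, a routine application of Fubini's theorem (legitimate because $\widehat H\in L^1$ and the spectral measure $\langle E_L(\cdot)f,g\rangle$ has finite total variation) gives the operator-valued identity
\[
F(L)=\frac{1}{\sqrt{2\pi}}\int_{-\infty}^{\infty}\widehat H(\tau)\,e^{(i\tau-1)R^{-1}L}\,d\tau ,
\]
the integral converging in the strong operator topology since $\|e^{(i\tau-1)R^{-1}L}\|_{2\to2}\le 1$. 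Sandwiching by the projections, bringing the norm inside, and applying Lemma~\ref{le2.2} yields
\[
\big\|P_{B}F(L)P_{U_j(B)}\big\|_{2\to2}\ \le\ \frac{1}{\sqrt{2\pi}}\int_{-\infty}^{\infty}|\widehat H(\tau)|\,\big\|P_{B}e^{(i\tau-1)R^{-1}L}P_{U_j(B)}\big\|_{2\to2}\,d\tau\ \le\ C\int_{-\infty}^{\infty}|\widehat H(\tau)|\exp\!\left(-c\Big(\tfrac{\sqrt[m]{R}\,2^j r}{\sqrt{1+\tau^2}}\Big)^{\frac{m}{m-1}}\right)d\tau .
\]

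To finish, I use the elementary inequality $e^{-c\,u^{m/(m-1)}}\le C_s\,u^{-s}$, valid for all $u>0$ and every $s\ge0$ (for $s>0$ the function $u\mapsto u^{s}e^{-c\,u^{m/(m-1)}}$ is continuous on $(0,\infty)$ and tends to $0$ as $u\to0^+$ and as $u\to\infty$, hence is bounded; for $s=0$ take $C_0=1$), with $u=\sqrt[m]{R}\,2^j r/\sqrt{1+\tau^2}$. Combined with $\sqrt{1+\tau^2}\le 1+|\tau|$ this gives
\[
\exp\!\left(-c\Big(\tfrac{\sqrt[m]{R}\,2^j r}{\sqrt{1+\tau^2}}\Big)^{\frac{m}{m-1}}\right)\ \le\ C_s\,(\sqrt[m]{R}\,2^j r)^{-s}\,(1+|\tau|)^{s},
\]
and inserting this into the previous display, then using $\int_{-\infty}^{\infty}|\widehat H(\tau)|(1+|\tau|)^{s}\,d\tau=\|H\|_{{\bf B}^{s}}\le C_s\|F(R\cdot)\|_{{\bf B}^{s}}$, produces exactly \eqref{e3.8}.

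The one genuine idea is the factorization $F(L)=H(L/R)\,e^{-R^{-1}L}$ with $H(\lambda)e^{-\lambda}=G(\lambda)$, which converts the bare group $e^{i\tau R^{-1}L}$ — for which no off-diagonal estimate is available — into the damped operator covered by Lemma~\ref{le2.2}; once that is in place the rest is bookkeeping, the only mildly delicate point being to carry the factor $\sqrt{1+\tau^2}$ through the exponential so that it turns into precisely the weight $(1+|\tau|)^{s}$ defining $\|\cdot\|_{{\bf B}^{s}}$.
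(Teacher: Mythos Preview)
Your proof is correct and follows essentially the same route as the paper: both factor $F(L)=H(L/R)e^{-R^{-1}L}$ with $H(\lambda)=F(R\lambda)\psi(\lambda)e^{\lambda}$ for a cutoff $\psi$, apply Fourier inversion to reduce to Lemma~\ref{le2.2}, convert the Gaussian decay to the power $(\sqrt[m]{R}\,2^jr)^{-s}(1+|\tau|)^{s}$, and then use the submultiplicativity of $\|\cdot\|_{{\bf B}^{s}}$ to pass from $\|H\|_{{\bf B}^{s}}$ back to $\|F(R\cdot)\|_{{\bf B}^{s}}$. Your write-up is in fact a bit more explicit about the justification of the operator-valued Fourier inversion and the elementary bound $e^{-cu^{m/(m-1)}}\le C_s u^{-s}$, but the underlying argument is identical.
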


\begin{proof}  Let $G(\lambda)=F({R\lambda}) e^{\lambda}.$ In virtue of the Fourier inversion formula
$$
F(L)=G(L/R)e^{-L/R}={1\over 2\pi} \int_{\mathbb R} e^{(i\tau-1)R^{-1}L} {\hat G}(\tau)d\tau
$$
we have that
$$
 \|P_{B}    F({L}) P_{U_j(B)}\|_{2\to 2} \leq
{1\over 2\pi} \int_{\R} |{\hat G}  (\tau)| \,   \|P_{B}   e^{(i\tau-1)R^{-1}L} P_{U_j(B)} \|_{2\to 2}
  d\tau.
$$
   Then it follows from Lemma~\ref{le2.2} for every $s\geq 0$,
 \begin{eqnarray*}
\|P_{B}   e^{(i\tau-1)R^{-1}L} P_{U_j(B)} \|_{2\to 2}&\leq&
C
  \exp\left(-c\left({ \sqrt[m]{R} 2^j r \over    \sqrt{1+\tau^2}}\right)^{\frac{m}{m-1}}   \right)\\
  &\leq& C_s     \left( {  \sqrt[m]{R} 2^jr \over \sqrt{1+\tau^2}    } \right)^{-s}.
\end{eqnarray*}
Therefore (compare \cite[(4.4)]{DOS})
  \begin{eqnarray*}
   \|P_{B}    F({L}) P_{U_j(B)}\|_{2\to 2}
   &\leq&
 C \big(\sqrt[m]{R} 2^j r)^{-s}
   \int_{\R} |{\hat G}  (\tau)|  \big(1+|\tau|)^{ s}  d\tau\nonumber\\
   &\leq&    C   \big(\sqrt[m]{R} 2^j r)^{-s}   \|G\|_{{{\bf B}^{s}}}.
\end{eqnarray*}
 Note that supp $F\subseteq [-R, R]$ and so supp $ F(R\cdot)\subseteq  [-1, 1]$. Thus taking a function
 $\psi\in C_c^\infty$ such that supp $\psi\subset [-2,2]$ and $\psi(\lambda)=1$ for $\lambda\in [-1,1]$, we have
 $$
 G(\lambda)=F({R\lambda}) e^{\lambda}=F({R\lambda}) \psi(\lambda)e^{\lambda}
 $$
 and so
 $$
  \|G\|_{{{\bf B}^{s}}} \leq C \|F(R\cdot)\|_{{{\bf B}^{s}}} \|\psi(\lambda)e^{\lambda}\|_{{{\bf B}^{s}}}
  \leq C \|F(R\cdot)\|_{{{\bf B}^{s}}} .
 $$
This ends the proof of Proposition~\ref{le2.3}.
\end{proof}

\begin{rem}\label{re3.4}\
In \cite[Proposition 4.1]{CCO}, Carron, Coulhon and Ouhabaz used some techniques introduced by Davies (\cite{D2})  to show that the upper
Gaussian estimate \eqref{GE}  on $e^{-tL}, t>0, $ extends to a similar estimate on $e^{-zL}$ where
$z$ belongs to  the whole complex right half-plane  and all $x, y\in X$,
\begin{eqnarray*}
|p_z(x,y)|\leq {C\over \left(V(x, ({|z|\over (\cos \theta)^{m-1} })^{1/m} )\, V(y, ({|z|\over (\cos \theta)^{m-1} })^{1/m} )\right)^{1/2}}
 \exp\left(-c \, {  \left({d(x,y)^{m}\over    |z|}\right)^{1\over m-1}}\cos\theta \right) {1\over (\cos\theta)^{n}}
 \end{eqnarray*}
where $\theta={\rm Arg} \, z$. It follows that
  for every $j=2,3,\ldots$
 \begin{eqnarray} \label{ecc}
\|P_{B}   e^{(i\tau-1)R^{-1}L} P_{U_j(B)} \|_{2\to 2}&\leq&
C 2^{jn}
  \exp\left(-c\left({ \sqrt[m]{R} 2^j r \over    \sqrt{1+\tau^2}}\right)^{\frac{m}{m-1}}   \right)
\end{eqnarray}
for all  balls $B=B(x_B, r)\subseteq  X$. In our Lemma~\ref{le2.2}, we made an important improvement in obtaining
the upper bound on the right hand side  of \eqref{ecc} without the factor ``$2^{jn}$".
This plays an essential role in
 estimate \eqref{e3.8} of Proposition~\ref{le2.3} and
in the proof of Theorem~\ref{th1.1} in Section 4.
\end{rem}

\bigskip

\section{Proof of Theorem~\ref{th1.1}}
\setcounter{equation}{0}

To prove \eqref{e1.6},  let us show that
\begin{eqnarray} \label{e1.66}
 \left\| (I+L)^{-n/2 }e^{itL} f\right\|_{H_L^1(X)} \leq C (1+|t|)^{n/2} \|f\|_{H_L^1(X)}, \ \   t\in{\mathbb R}.
\end{eqnarray}
The proof of  estimate of $\| (I+L)^{-n/2 }e^{itL} f\|_{L^1(X)} $ uses similar ideas,  but it is much  simpler.
In the following,
$\phi$ denotes  a non-negative $C_c^{\infty}$ function on $\mathbb R$ such
that ${\rm supp}\  \phi \subseteq ({1/4}, 1)$
and let
$\phi_\tau(\lambda)$ denote the function $\phi(\tau\lambda)$. Also we let
 $\psi\in C_c^\infty$   supported in  $\psi\subset [1/8, 2]$ and $\psi(\lambda)=1$ for $\lambda\in [1/4,1]$.
To prove   \eqref{e1.66}, it follows by Lemma~\ref{prop2.10}  and a standard argument (see for example, \cite{DY3, HLMMY, HM, KU}) that
it suffices to show  that for every $(1,2,M,\varepsilon)$-molecule $a$ associated to a
ball $B$,
\begin{eqnarray}\label{e3.3} \hspace{0.5cm}
\left\|\left(\int_0^{\infty}\!\!\!\!\int_{\substack{  d(x,y)<\tau^{1/m}}}
|\phi(\tau L)(1+L)^{-n/2}e^{itL}a(y)|^2 {d\mu(y)\over V(x,\tau^{1/m})}{d\tau\over \tau}\right)^{1/2}\right\|_{L^1}\leq C(1+|t|)^{n/2}, \ \ \  t\in{\mathbb R}
\end{eqnarray}
where $M\in{\mathbb N}$  is large enough so that $M>n/2.$

Recall that if $a$ is a $(1,2,M,\varepsilon)$-molecule
associated to a ball $B=B(x_B,r)$, then there exists a function $b$ such that $a=L^Mb$ and for every $k=0,1,2,\dots,M$ and $j=0,1,2,\dots$, there holds
\begin{eqnarray}\label{ee3.0}
\|(r^mL)^{k}b\|_{L^2(U_j(B))}\leq 2^{-j\varepsilon} r^{mM}
V(2^jB)^{-{1/2}},
\end{eqnarray}
where the annuli $U_j(B)$ were defined in (\ref{e2.3}).
  Following  \cite{HM},
we write
\begin{eqnarray}
&&\hspace{-1.2cm}I=m\, \Big( r^{-m}\int_{r}^{\sqrt[m]{2}r}s^{m-1}ds\Big)   \cdot I\nonumber\\
&=&mr^{-m} \int_{r}^{\sqrt[m]{2}r}s^{m-1}(I-e^{-s^mL})^Mds
 +  \sum_{\nu=1}^M C_{\nu,M}  r^{-m}
\int_{r}^{\sqrt[m]{2}r}
s^{m-1} e^{- \nu s^mL} ds,
\label{e3.5}
\end{eqnarray}
\noindent
where $C_{\nu, M}$ are some constants depending   on $\nu$ and $M$ only. However,
 $\partial_se^{-\nu s^mL}=-m\nu s^{m-1}Le^{-\nu s^mL} $ and therefore,
\begin{eqnarray}
m\nu L\int_{r}^{\sqrt[m]{2}r} s^{m-1} e^{-\nu s^mL} ds
&=&e^{-\nu r^mL}-e^{-2\nu r^mL}
=
e^{-\nu r^mL} (I-e^{-\nu r^mL})     \nonumber\\ [4pt]
&=&  e^{-\nu r^mL}
(I-e^{-r^mL})    \sum_{\mu=0}^{\nu -1}e^{-ir^mL}.
\label{e3.6}
\end{eqnarray}

In the following, we set $F_\tau(\lambda):=\phi(\tau \lambda)(1+\lambda)^{-n/2}e^{it\lambda}, t>0$.
Applying the procedure outline in (\ref{e3.5})-(\ref{e3.6})  $M$ times,
we have for every $x\in X,$
\begin{eqnarray*}
 F_\tau(L)a(x) &=&(1+L)^{-n/2}e^{itL}a(x)\\
&=&m^M\left(r^{-m}\int_r^{\sqrt[m] 2 r} s^{m-1}(1-e^{-s^mL})^M ds\right)^M F_\tau(L)a(x) \\
 &+&\sum_{k=1}^M (1-e^{-r^mL})^k \left(r^{-m}\int_r^{\sqrt[m] 2 r} s^{m-1}(1-e^{-s^mL})^{M}ds\right)^{M-k}\times\\
 &&\ \ \ \times  \sum_{\nu =1}^{(2M-1)k} C(\nu ,k, M) e^{-\nu  r^mL}F_\tau(L) (r^{-mk}L^{M-k} b)(x)\\
&=&\sum_{k=0}^{M-1} r^{-m}  \int_r^{\sqrt[m] 2 r} s^{m-1}(1-e^{-s^mL})^{M}G_{k,r,M}(L)F_\tau(L) (r^{-mk}L^{M-k} b)ds\\
 & +& \sum_{\nu =1}^{(2M-1)M} C(\nu, k, M) e^{-\nu  r^mL}F_\tau(L)(1-e^{-r^mL})^M(r^{-mM} b)(x)\\
&=:& \sum_{k=0}^{M-1} E_k(x)+ E_M(x),
\end{eqnarray*}
where for $k=1,2,\cdots,M-1$
$$
G_{k,r,M}(\lambda):=(1-e^{-r^m\lambda})^k \left(r^{-m}\int_r^{\sqrt[m] 2 r} s^{m-1}(1-e^{-s^m\lambda})^{M}ds\right)^{M-k-1}
\sum_{\nu =1}^{(2M-1)k} C(\nu,k, M) e^{-\nu r^m\lambda}
$$
and for $k=0$
$$
G_{0,r,M}(\lambda):=m^M\left(r^{-m}\int_r^{\sqrt[m] 2 r} s^{m-1}(1-e^{-s^m\lambda})^{M}ds\right)^{M-1}.
$$
We will establish an adequate bound on each $E_k, k=0, 1, \cdots, M$, by considering  two cases $k=0, 1, \cdots, M-1$
and $k=M.$

\smallskip
Next define
\begin{eqnarray} \label{eddde1}
F_{\tau,s}(\lambda):=\phi_\tau(\lambda)(1-e^{-s^m\lambda})^{M}G_{k,r,M}(\lambda)F(\lambda).
\end{eqnarray}
\noindent
{\bf Case 1.} \ $k=0, 1, \cdots, M-1$. \ In this case, we see that
  \begin{eqnarray}\label{e1}
&&\left\|\left(\int_0^{\infty}\!\!\!\!\int_{\substack{  d(x,y)<\tau^{1/m}}}
|E_k(y)|^2 {d\mu(y)\over V(x,\tau^{1/m})}{d\tau\over \tau}\right)^{1/2}\right\|_{L^1}\nonumber\\
&\leq& C\sup_{s\in [r,\sqrt[m] 2 r]}\left\|\left(\int_0^{\infty}\!\!\!\!\int_{\substack{  d(x,y)<\tau^{1/m}}}
|F_{\tau,s}(L) (r^{-mk}L^{M-k} b)(y)|^2 {d\mu(y)\over V(x,\tau^{1/m})}{d\tau\over \tau}\right)^{1/2}\right\|_{L^1}\nonumber\\
&\leq& C\sum_{j\geq 0}\sup_{s\in [r,\sqrt[m] 2 r]}\left\|\left(\int_0^{\infty}\!\!\!\!\int_{\substack{  d(x,y)<\tau^{1/m}}}
|F_{\tau,s}(L)P_{U_{j}(B)} (r^{-mk}L^{M-k} b)(y)|^2 {d\mu(y)\over V(x,\tau^{1/m})}{d\tau\over \tau}\right)^{1/2}\right\|_{L^1}\nonumber\\
&=:& C\sum_{j\geq 0}\sup_{s\in [r,\sqrt[m] 2 r]} \|E(k, j, s)\|_{L^1(X)},
\end{eqnarray}
where
$$
E(k, j, s)=\left(\int_0^{\infty}\!\!\!\!\int_{\substack{  d(x,y)<\tau^{1/m}}}
|F_{\tau,s}(L)P_{U_{j}(B)} (r^{-mk}L^{M-k} b)(y)|^2 {d\mu(y)\over V(x,\tau^{1/m})}{d\tau\over \tau}\right)^{1/2}.
$$

Let us estimate the term $\|E(k, j, s)\|_{L^1(X)}.$ Note that  $\|G_{k,r,M}\|_{L^\infty}+ \|F\|_{L^\infty}\leq C$.
We apply estimate  \eqref{ee3.0}, the $L^2$-boundedness of the area square function  and   the doubling condition \eqref{eq2.2},
$$
\left({\mu((1+t)2^j B) \over  \mu(2^j B)} \right)\leq C (1+t)^n, \ \ \ \ j\geq 0,
$$
to get
\begin{eqnarray} \label{eddd}
&&\|E(k, j, s)\|^2_{L^1(4(1+t)2^j B)}\nonumber\\
&=&\left\|\left(\int_0^{\infty}\!\!\!\!\int_{\substack{  d(x,y)<\tau^{1/m}}}
|F_{\tau,s}(L)P_{U_{j}(B)} (r^{-mk}L^{M-k} b)(y)|^2 {d\mu(y)\over V(x,\tau^{1/m})}
{d\tau\over \tau}\right)^{1/2}\right\|^2_{L^1(4(1+t)2^j B)}\nonumber\\
&\leq& \left\|\left(\int_0^{\infty}\!\!\!\!\int_{\substack{  d(x,y)<\tau^{1/m}}}
|F_{\tau,s}(L)P_{U_{j}(B)} (r^{-mk}L^{M-k} b)(y)|^2
{d\mu(y)\over V(y,\tau^{1/m})}{d\tau\over \tau}\right)^{1/2}\right\|^2_{L^2(4(1+t)2^j B)} \mu(4(1+t)2^j B)\nonumber\\
&\leq& \left\|(1-e^{-s^mL})^{M}G_{k,r,M}(L)F(L)P_{U_{j}(B)} (r^{-mk}L^{M-k} b)(y)\right\|_2^2 \mu(4(1+t)2^j B)\nonumber\\
&\leq& C (\|G_{k,r,M}\|_{L^\infty}+ \|F\|_{L^\infty}) r^{-2mk}\left\|P_{U_{j}(B)} L^{M-k}b\right\|^2_{2}\mu((1+t)2^j B)\nonumber\\
&\leq& C 2^{-2j \varepsilon} \mu(2^j B)^{-1}\mu((1+t)2^j B)\nonumber\\
&\leq& C 2^{-2j \varepsilon} (1+t)^{n}.
\end{eqnarray}

Next we show that for some $\varepsilon'>0$,
\begin{eqnarray} \label{eddde}
\|E(k, j, s)\|_{L^1((4(1+t)2^j B)^c)}   \leq  C2^{-j \varepsilon'}(1+t)^{n/2},
\end{eqnarray}
 and this is the major one.
We have a decomposition according to the frequency,
\begin{eqnarray*}
E(k, j, s)&=&\left(\int_0^{\infty}\!\!\!\!\int_{\substack{  d(x,y)<\tau^{1/m}}}
|F_{\tau,s}(L)P_{U_{j}(B)} (r^{-mk}L^{M-k} b)(y)|^2 {d\mu(y)\over V(x,\tau^{1/m})}{d\tau\over \tau}\right)^{1/2}\\
&=& \left(\sum_{\ell\in\ZZ}\int_{2^{-\ell}}^{2^{-\ell+1}}\!\!\!\!\int_{\substack{  d(x,y)<\tau^{1/m}}}
|F_{\tau,s}(L)P_{U_{j}(B)} (r^{-mk}L^{M-k} b)(y)|^2 {d\mu(y)\over V(x,\tau^{1/m})}{d\tau\over \tau}\right)^{1/2}\\
&\leq& \sum_{\ell\in\ZZ}\left(\int_{2^{-\ell}}^{2^{-\ell+1}}\!\!\!\!\int_{\substack{  d(x,y)<\tau^{1/m}}}
|F_{\tau,s}(L)P_{U_{j}(B)} (r^{-mk}L^{M-k} b)(y)|^2 {d\mu(y)\over V(x,\tau^{1/m})}{d\tau\over \tau}\right)^{1/2}\\
&=:&\sum_{\ell\in\ZZ} E(k, j, s, \ell).
\end{eqnarray*}
If $\ell>0$, let $\nu^+_0\in {\mathbb Z}_+$ be a  positive integer such that
\begin{eqnarray}\label{bbb}
 2\leq 2^{\nu^+_0+j-\ell(m-1)/m}r\leq 4 \ \ \ \ {\rm if} \ \ \ 2^{j-\ell(m-1)/m}r<1;\nonumber\\[1pt]
 \nu^+_0=1 \ \ \ \ {\rm if} \ \ \ 2^{j-\ell(m-1)/m}r\geq 1.
 \end{eqnarray}
If $\ell\leq 0$, let $\nu^-_0\in {\mathbb Z}_+$ be a  positive integer such that
\begin{eqnarray}\label{bbb-}
 2\leq 2^{\nu^-_0+j+\ell/m}r\leq 4 \ \ \ \ {\rm if} \ \ \ 2^{\ell/m+j}r<1;\nonumber\\[1pt]
 \nu^-_0=1 \ \ \ \ {\rm if} \ \ \ 2^{\ell/m+j}r\geq 1.
 \end{eqnarray}
 Then
\begin{eqnarray*}
\|E(k, j, s)\|_{L^1((4(1+t)2^j B)^c)}
&\leq& \sum_{\ell>0}\sum_{\nu\geq \nu^+_0} \|E(k, j, s, \ell)\|_{L^1({U_{\nu+j}((1+t)B)} )}+
 \sum_{\ell>0}\|E(k, j, s, \ell)\|_{L^1(B(x_B,2(1+t)2^{\ell(m-1)/m}))}\\
 &+& \sum_{\ell\leq 0}\sum_{\nu\geq \nu^-_0} \|E(k, j, s, \ell)\|_{L^1({U_{\nu+j}((1+t)B)} )}+
 \sum_{\ell\leq 0}\|E(k, j, s, \ell)\|_{L^1(B(x_B,2(1+t)2^{-\ell/m}))}\\
&=:&I^+(k, j, s)+II^+(k, j, s)+I^-(k, j, s)+II^-(k, j, s).
\end{eqnarray*}
We first estimate terms $II^+(k, j, s)$ and $I^+(k, j, s)$.
Note that there is no term $II^+(k, j, s)$ if $2^{j-\ell(m-1)/m}r\geq 1$ and $\ell>0$. When $2^{j-\ell(m-1)/m}r\leq 1$ and $\ell>0$,
for the   term $II^+(k, j, s)$,
we note that from the doubling condition
$$
V(x,\tau^{1/m})\sim V(y,\tau^{1/m}), \mbox{\,when\,} d(x,y)<\tau^{1/m}
$$
 and then it follows from estimate  \eqref{ee3.0} that
\begin{eqnarray*}
&&\|E(k, j, s, \ell)\|^2_{L^2(X)} \\
&=&\int_X\int_{2^{-\ell}}^{2^{-\ell+1}}\!\!\!\!\int_{\substack{  d(x,y)<\tau^{1/m}}}
|F_{\tau,s}(L)P_{U_{j}(B)} (r^{-mk}L^{M-k} b)(y)|^2 {d\mu(y)\over V(x,\tau^{1/m})}{d\tau\over \tau}\,d\mu(x)\\
&=&\int_{2^{-\ell}}^{2^{-\ell+1}}\int_X|F_{\tau,s}(L)P_{U_{j}(B)} (r^{-mk}L^{M-k} b)(y)|^2\int_{\substack{  d(x,y)<\tau^{1/m}}}\,d\mu(x)
 {d\mu(y)\over V(y,\tau^{1/m})}{d\tau\over \tau}\\
&\leq&\int_{2^{-\ell}}^{2^{-\ell+1}}\|F_{\tau,s}(L)P_{U_{j}(B)} (r^{-mk}L^{M-k} b)\|_2^2{d\tau\over \tau}\\
&\leq&\int_{2^{-\ell}}^{2^{-\ell+1}} \|F_{\tau,s}\|_{L^\infty}^2 \|P_{U_{j}(B)} (r^{-mk}L^{M-k} b)\|_2^2{d\tau\over \tau}\\
&\leq&\int_{2^{-\ell}}^{2^{-\ell+1}}C\min\{1, (\tau^{-1/m} r)^{2M}\} \tau^{n} 2^{-j 2\varepsilon} V(x_B,2^j r)^{-1}{d\tau\over \tau}\\
&\leq&C\min\{1, (2^{\ell/m} r)^{2M}\} 2^{-n\ell} 2^{-j 2\varepsilon} V(x_B,2^j r)^{-1}
\end{eqnarray*}
and thus
\begin{eqnarray*}
&&II^+(k, j, s)\\
&\leq& \sum_{\ell>0}\|E(k, j, s, \ell)\|_{L^2(B(x_B,(1+t)2^{\ell(m-1)/m}))} V\big(x_B,2(1+t)2^{\ell(m-1)/m}\big)^{1/2}\\
&\leq& \sum_{\ell>0}C\min\{1, (2^{\ell/m} r)^M\} 2^{-n\ell/2} 2^{-j \varepsilon} V(x_B,2^j r)^{-1/2}V(x_B,(1+t)2^{\ell(m-1)/m})^{1/2}\\
&\leq& C2^{-j \varepsilon}\sum_{\ell>0}\min\{1, (2^{\ell/m} r)^M\} (2^{\ell/m}r)^{-n/2}(1+t)^{n/2}\\
&\leq& C2^{-j \varepsilon}(1+t)^{n/2}.
\end{eqnarray*}
To estimate  term $I^+(k, j, s)$, we first note that it follows from \eqref{bbb} that for $\tau\in [2^{-\ell},2^{-\ell+1}]$
and $\ell>0$,
$$
\tau^{1/m}\leq 2^{1/m}2^{-\ell/m}\leq 1  \leq   2^{\ell(m-1)/m}\leq
 2^{\nu+j-1}(1+t)r.
$$
So if $d(x,y)<\tau^{1/m}$ and $x\in {U_{\nu+j}((1+t)B)} $, then $y\in U'_{\nu+j}((1+t)B)$ where
$$
U'_{\nu+j}((1+t)B):=B(x_B, 2^{\nu+j+1}(1+t)r)\backslash B(x_B, 2^{\nu+j-1}(1+t)r).
$$
Then we have
\begin{eqnarray}\label{I+}
&&\|E(k, j, s, \ell)\|^2_{L^2({U_{\nu+j}((1+t)B)} )}\nonumber \\
&=&\int_{{U_{\nu+j}((1+t)B)} }\int_{2^{-\ell}}^{2^{-\ell+1}}\!\!\!\!\int_{\substack{  d(x,y)<\tau^{1/m}}}
|F_{\tau,s}(L)P_{U_{j}(B)} (r^{-mk}L^{M-k} b)(y)|^2 {d\mu(y)\over V(x,\tau^{1/m})}{d\tau\over \tau}\,d\mu(x)\nonumber\\
&=&\int_{2^{-\ell}}^{2^{-\ell+1}}\int_{U'_{\nu+j}((1+t)B)}|F_{\tau,s}(L)P_{U_{j}(B)} (r^{-mk}L^{M-k} b)(y)|^2\int_{\substack{  d(x,y)<\tau^{1/m}}}\,d\mu(x)
 {d\mu(y)\over V(y,\tau^{1/m})}{d\tau\over \tau}\nonumber\\
&\leq&\int_{2^{-\ell}}^{2^{-\ell+1}}\|F_{\tau,s}(L)P_{U_{j}(B)} (r^{-mk}L^{M-k} b)\|_{L^2(U'_{\nu+j}((1+t)B))}^2
{d\tau\over \tau}\nonumber\\
&\leq&\int_{2^{-\ell}}^{2^{-\ell+1}} \|F_{\tau,s}(L)\|_{L^2(U_{j}(B))\to L^2(U'_{\nu+j}((1+t)B))}^2 \|P_{U_{j}(B)}
(r^{-mk}L^{M-k} b)\|_2^2{d\tau\over \tau}
\end{eqnarray}
By Proposition~\ref{le2.3},
\begin{eqnarray}\label{2to2offinproof}
\|P_{U'_{\nu+j}((1+t)B)} F_{\tau,s}(L)P_{U_{j}(B)}\|_{2\to 2}\leq C \left(2^\nu \tau^{-1/m} (1+t)r\right)^{-\alpha}
\|\delta_{\tau^{-1}} F_{\tau,s}\|_{{{\bf B}}^{\alpha } }
\end{eqnarray}
 for every $\alpha> 0$.
To go on, we claim  that for every $\tau>0$
\begin{eqnarray}\label{claimBw}
\|\delta_{\tau^{-1}} F_{\tau,s}\|_{{\bf B}^{\alpha } }
&=& \int_{-\infty}^{\infty} |\widehat{\delta_{\tau^{-1}} F_{\tau,s}}(\xi)|(1+|\xi|)^{\alpha}d\xi\nonumber\\
&\leq&
C \max\{1,\tau^{ n/2-\alpha}\}(1+t)^\alpha \min\{1, (\tau^{-1/m} r)^M\}.
\end{eqnarray}
Let us show the claim \eqref{claimBw}. Recall that
 $\psi\in C_c^\infty$   supported in  $\psi\subset [1/8, 2]$ and $\psi(\lambda)=1$ for $\lambda\in [1/4,1]$.
We have that  for $\tau>0$,
\begin{eqnarray*}
\|\delta_{\tau^{-1}} F_{\tau,s}\|_{{\bf B}^{\alpha } }
&=& \| \phi(\lambda)(1-e^{-s^m\tau^{-1}\lambda})^{M}G_{k,r,M}(\tau^{-1}\lambda)F(\tau^{-1}\lambda)\|_{{\bf B}^{\alpha } }\\
&\leq& \| \psi(\lambda)(1-e^{-s^m\tau^{-1}\lambda})^{M}\|_{{\bf B}^{\alpha } }
\| \psi(\lambda)G_{k,r,M}(\tau^{-1}\lambda)\|_{{\bf B}^{\alpha } }
\| \phi(\lambda)F(\tau^{-1}\lambda)\|_{{\bf B}^{\alpha } }\\
&\leq& C\| \psi(\lambda)(1-e^{-s^m\tau^{-1}\lambda})^{M}\|_{C^{\alpha+2}}
\| \psi(\lambda)G_{k,r,M}(\tau^{-1}\lambda)\|_{C^{\alpha+2}}
\| \phi(\lambda)F(\tau^{-1}\lambda)\|_{{\bf B}^{\alpha } }.
\end{eqnarray*}
Note that for every $s\in [r, \sqrt[m]{2}r]$,
$$
\| \psi(\lambda)(1-e^{-s^m\tau^{-1}\lambda})^{M}\|_{C^{\alpha+2}}\leq C\min\{1,(\tau^{-1/m}r)^M\}
$$
and
$$
\| \psi(\lambda)G_{k,r,M}(\tau^{-1}\lambda)\|_{C^{\alpha+2}}\leq C
$$
with $C$ independent of $k,\tau$ and $r$. Let us  estimate $\| \phi(\lambda)F(\tau^{-1}\lambda)\|_{{\bf B}^{\alpha } }$.
It follows from the Fourier transform ${\mathcal F} \big(\phi F(\tau^{-1}\cdot)\big)$ of $\phi F(\tau^{-1}\cdot)$ that
\begin{eqnarray*}
{\mathcal F} \big(\phi F(\tau^{-1}\cdot)\big)(\xi)=\int_{\mathbb{R}} \phi(\lambda)\frac{e^{i( \tau^{-1} t-\xi)\lambda}}{(1+\tau^{-1}\lambda)^{n/2}}d\lambda.
\end{eqnarray*}
Integration by parts gives for every $N\in \mathbb{N}$,
\begin{eqnarray*}
\left|{\mathcal F} \big(\phi F(\tau^{-1}\cdot)\big)(\xi)\right|\leq
C_N\min\{1,\tau^{ n/2}\}(1+|\tau^{-1} t-\xi|)^{-N},
\end{eqnarray*}
which yields
\begin{eqnarray*}
\| \phi(\lambda)F(\tau^{-1}\lambda)\|_{{\bf B}^{\alpha } }&\leq& C\min\{1,\tau^{ n/2}\}\int_{\mathbb{R}} (1+|\tau^{-1}t-\xi|)^{-N}
(1+|\xi|)^\alpha d\xi\\
&\leq& C\max\{1,\tau^{n/2-\alpha}\}\left(1+ t\right)^\alpha.
\end{eqnarray*}
Hence,  for every $\tau>0$,
\begin{eqnarray*} 
\|\delta_{\tau^{-1}} F_{\tau,s}\|_{{\bf B}^{\alpha } }
&\leq&
C \max\{1,\tau^{ n/2-\alpha}\}(1+t)^\alpha \min\{1, (\tau^{-1/m} r)^M\}.
\end{eqnarray*}
This proves our claim  \eqref{claimBw}.

Next letting $\alpha$ be a fixed number such that $0<\alpha-n/2<\varepsilon$,  we apply   \eqref{ee3.0}
and  the doubling condition \eqref{eq2.2} and \eqref{I+}, \eqref{2to2offinproof} and \eqref{claimBw} to get
\begin{eqnarray}\label{kkkk}
 &&I^+(k, j, s)\nonumber\\
 &\leq&  \sum_{\ell>0}\sum_{\nu\geq \nu^+_0} \|E(k, j, s, \ell)\|_{L^2({U_{\nu+j}((1+t)B)} )}V(x_B,2^{\nu+j}(1+t)r)^{1/2}\nonumber\\
  &\leq&  \sum_{\ell>0}\sum_{\nu\geq \nu^+_0} \left(\int_{2^{-\ell}}^{2^{-\ell+1}}
  \|F_{\tau,s}(L)\|_{L^2(U_{j}(B))\to L^2(U'_{\nu+j}((1+t)B))}^2 \|P_{U_{j}(B)}
(r^{-mk}L^{M-k} b)\|_2^2{d\tau\over \tau}\right)^{1/2}V(x_B,2^{\nu+j}(1+t)r)^{1/2}\nonumber\\
&\leq&  \sum_{\ell>0}\sum_{\nu\geq \nu^+_0} \left(2^\nu 2^{\ell/m} (1+t)r\right)^{-\alpha}2^{(\alpha-n/2) \ell}(1+t)^\alpha \min\{1, (2^{\ell/m} r)^M\}
2^{-j \varepsilon}
\left({V(x_B,2^{\nu+j}(1+t)r) \over V(x_B, 2^jr)}\right)^{1/2}\nonumber\\
  &\leq& C2^{-j \varepsilon}(1+t)^{n/2}\sum_{\ell>0}\sum_{\nu\geq \nu^+_0}
  2^{(-\alpha+n/2)\nu} ( 2^{\ell/m} r )^{-\alpha} 2^{(\alpha-n/2) \ell}\min\{1, (2^{\ell/m} r)^M\}
  \nonumber\\
  &\leq & C2^{-j \varepsilon}(1+t)^{n/2}\sum_{\ell>0} (2^{j-\ell(m-1)/m}r)^{(\alpha-n/2)}
  ( 2^{\ell/m} r )^{-\alpha} 2^{(\alpha-n/2) \ell}\min\{1, (2^{\ell/m} r)^M\} \nonumber\\
  &\leq& C2^{-j (\varepsilon-\alpha+n/2)}(1+t)^{n/2}\sum_{\ell>0}  (2^{\ell/m}r )^{-n/2}\min\{1, (2^{\ell/m} r)^M\} \nonumber\\
  &\leq& C2^{-j \varepsilon'}(1+t)^{n/2} \nonumber
\end{eqnarray}
with $\epsilon'=\epsilon-\alpha+n/2.$

For terms $I^-(k, j, s)$ and $II^-(k, j, s)$, the estimates are similar to terms  $I^+(k, j, s)$ and $II^+(k, j, s)$ and simpler.
Note that there is no term $II^-(k, j, s)$ if $2^{j+\ell/m}r\geq 1$ and $\ell\leq 0$. So when $2^{j+\ell/m}r\leq 1$ and $\ell\leq 0$,
then for the   term $II^-(k, j, s)$,
we note that it follows from estimate  \eqref{ee3.0} that
\begin{eqnarray*}
&&\|E(k, j, s, \ell)\|^2_{L^2(X)} \\
&=&\int_X\int_{2^{-\ell}}^{2^{-\ell+1}}\!\!\!\!\int_{\substack{  d(x,y)<\tau^{1/m}}}
|F_{\tau,s}(L)P_{U_{j}(B)} (r^{-mk}L^{M-k} b)(y)|^2 {d\mu(y)\over V(x,\tau^{1/m})}{d\tau\over \tau}\,d\mu(x)\\
&=&\int_{2^{-\ell}}^{2^{-\ell+1}}\int_X|F_{\tau,s}(L)P_{U_{j}(B)} (r^{-mk}L^{M-k} b)(y)|^2\int_{\substack{  d(x,y)<\tau^{1/m}}}\,d\mu(x)
 {d\mu(y)\over V(y,\tau^{1/m})}{d\tau\over \tau}\\
&\leq&\int_{2^{-\ell}}^{2^{-\ell+1}}\|F_{\tau,s}(L)P_{U_{j}(B)} (r^{-mk}L^{M-k} b)\|_2^2{d\tau\over \tau}\\
&\leq&\int_{2^{-\ell}}^{2^{-\ell+1}} \|F_{\tau,s}\|_{L^\infty}^2 \|P_{U_{j}(B)} (r^{-mk}L^{M-k} b)\|_2^2{d\tau\over \tau}\\
&\leq&\int_{2^{-\ell}}^{2^{-\ell+1}}C\min\{1, (\tau^{-1/m} r)^{2M}\}  2^{-j 2\varepsilon} V(x_B,2^j r)^{-1}{d\tau\over \tau}\\
&\leq&C\min\{1, (2^{\ell/m} r)^{2M}\}  2^{-j 2\varepsilon} V(x_B,2^j r)^{-1}
\end{eqnarray*}
and thus
\begin{eqnarray*}
&&II^-(k, j, s)\\
&\leq& \sum_{\ell\leq0}\|E(k, j, s, \ell)\|_{L^2(B(x_B,(1+t)2^{-\ell/m}))} V\big(x_B,(1+t)2^{-\ell/m}\big)^{1/2}\\
&\leq& \sum_{\ell\leq0}C\min\{1, (2^{\ell/m} r)^M\} 2^{-j \varepsilon} V(x_B,2^j r)^{-1/2}V(x_B,(1+t)2^{-\ell/m})^{1/2}\\
&\leq& C2^{-j \varepsilon}\sum_{\ell\leq 0}\min\{1, (2^{\ell/m} r)^M\} (2^{\ell/m}r)^{-n/2}(1+t)^{n/2}\\
&\leq& C2^{-j \varepsilon}(1+t)^{n/2}.
\end{eqnarray*}
To estimate  term $I^-(k, j, s)$, we first note that it follows from \eqref{bbb-} that for $\tau\in [2^{-\ell},2^{-\ell+1}]$
$$
\tau^{1/m}\leq 2^{1/m}2^{-\ell/m}\leq  2^{\nu+j-1}(1+t)r.
$$
So if $d(x,y)<\tau^{1/m}$ and $x\in {U_{\nu+j}((1+t)B)} $, then $y\in U'_{\nu+j}((1+t)B)$ where
$$
U'_{\nu+j}((1+t)B):=B(x_B, 2^{\nu+j+1}(1+t)r)\backslash B(x_B, 2^{\nu+j-1}(1+t)r).
$$
Then , letting $\alpha$ be a fixed number such that $0<\alpha-n/2<\varepsilon$, by \eqref{claimBw} for $\ell\leq 0$ and similarly as in \eqref{kkkk},
\begin{eqnarray}\label{kkkk-}
 &&I^-(k, j, s)\nonumber\\
 &\leq&  \sum_{\ell\leq 0}\sum_{\nu\geq \nu^-_0} \|E(k, j, s, \ell)\|_{L^2({U_{\nu+j}((1+t)B)} )}V(x_B,2^{\nu+j}(1+t)r)^{1/2}\nonumber\\
  &\leq&  \sum_{\ell\leq 0}\sum_{\nu\geq \nu^-_0} \left(\int_{2^{-\ell}}^{2^{-\ell+1}}
  \|F_{\tau,s}(L)\|_{L^2(U_{j}(B))\to L^2(U'_{\nu+j}((1+t)B))}^2 \|P_{U_{j}(B)}
(r^{-mk}L^{M-k} b)\|_2^2{d\tau\over \tau}\right)^{1/2}V(x_B,2^{\nu+j}(1+t)r)^{1/2}\nonumber\\
&\leq&  \sum_{\ell\leq 0}\sum_{\nu\geq \nu^-_0} \left(2^\nu 2^{\ell/m} (1+t)r\right)^{-\alpha}(1+t)^\alpha \min\{1, (2^{\ell/m} r)^M\}
2^{-j \varepsilon}
\left({V(x_B,2^{\nu+j}(1+t)r) \over V(x_B, 2^jr)}\right)^{1/2}\nonumber\\
  &\leq& C2^{-j \varepsilon}(1+t)^{n/2}\sum_{\ell\leq 0}\sum_{\nu\geq \nu^-_0}  2^{(-\alpha+n/2)\nu} ( 2^{\ell/m} r )^{-\alpha} \min\{1, (2^{\ell/m} r)^M\}
  \nonumber\\
&\leq & C2^{-j \varepsilon}(1+t)^{n/2}\sum_{\ell\leq 0} ( 2^{\ell/m} r)^{-\alpha} \min\{1, (2^{\ell/m} r)^M\}\nonumber\\
&\leq& C2^{-j \varepsilon}(1+t)^{n/2}\nonumber.
\end{eqnarray}

Combining two estimates of $I^+(k, j, s)$, $II^+(k, j, s)$, $I^-(k, j, s)$ and $II^-(k, j, s)$ we obtain \eqref{eddde}.
This, in combination with \eqref{eddd} and \eqref{e1}, shows that
\begin{eqnarray*}
&& \sum_{k=0}^{M-1} \left\|\left(\int_0^{\infty}\!\!\!\!\int_{\substack{  d(x,y)<\tau^{1/m}}}
|E_k(y)|^2 {d\mu(y)\over V(x,\tau^{1/m})}{d\tau\over \tau}\right)^{1/2}\right\|_{L^1} \\
&\leq&  C\sum_{k=0}^{M-1} \sum_{j\geq 0}\sup_{s\in [r,\sqrt[m] 2 r]} \|E(k, j, s)\|_{L^1(X)}
   \leq  C\sum_{j\geq 0}2^{-j \varepsilon'}(1+t)^{n/2} \leq C (1+t)^{n/2}.
\end{eqnarray*}

\medskip

\noindent
{\bf Case 2.} \ $k=M$.

In this case, we write
  \begin{eqnarray*}\label{e11000}
&&\left\|\left(\int_0^{\infty}\!\!\!\!\int_{\substack{  d(x,y)<\tau^{1/m}}}
|E_M(y)|^2 {d\mu(y)\over V(x,\tau^{1/m})}{d\tau\over \tau}\right)^{1/2}\right\|_{L^1} \\
&\leq& C \sum_{j=1}^{(2M-1)M}\left\|\left(\int_0^{\infty}\!\!\!\!\int_{\substack{  d(x,y)<\tau^{1/m}}}
|(1-e^{-r^mL})^{M}e^{-j r^mL} F_\tau(L) (r^{-mM}  b)(y)|^2 {d\mu(y)\over V(x,\tau^{1/m})}{d\tau\over \tau}\right)^{1/2}\right\|_{L^1}.
\end{eqnarray*}
Similar to the proof of $E_k$ as in {\bf Case 1 }, we have that
 \begin{eqnarray*}\label{e110000}
&&\left\|\left(\int_0^{\infty}\!\!\!\!\int_{\substack{  d(x,y)<\tau^{1/m}}}
|E_M(y)|^2 {d\mu(y)\over V(x,\tau^{1/m})}{d\tau\over \tau}\right)^{1/2}\right\|_{L^1} \\
&\leq&  C\sum_{j\geq 0}2^{-j \varepsilon'}(1+t)^{n/2} \leq C (1+t)^{n/2}.
\end{eqnarray*}
 Hence, we have proved
estimate (\ref{e3.3}), and then concluded  the proof of \eqref{e1.66}.

 \medskip

 Now we turn to prove \eqref{e1.555}. To do this, we need to state a complex interpolation result.
Fix a pair of  Banach spaces $E_0, E_1$ continuously embedded in some Banach space $V$ such that
$E_0\cap E_1$ contains a dense subspace ${\mathcal D}$ of both $E_0, E_1$ under the corresponding norms.
Let $S=\{z: 0<{\rm Re} z<1\}$ and ${\bar S}=\{z: 0\leq {\rm Re} z \leq 1\}$. Following \cite{CA}, we define ${\mathcal F}(E_0, E_1)$
to be the set of all functions $F$ on ${\bar S}$ with values in $E_0+ E_1$, analytic in $S$ and such that
$F(it)\in E_0$ is $E_0$-continuous and tends to $0$ as $|t|\to \infty$
and $F(1+it)\in E_1$ is $E_1$-continuous and tends to $0$  as $|t|\to \infty$.
${\mathcal F}(E_0, E_1)$ becomes a Banach space under the norm
$$
\|F\|_{{\mathcal F}}= \sup_{t\in{\mathbb R}} \max \big( \|F(it)\|_{E_0}, \  \|F(1+it)\|_{E_1} \big).
$$
Given a real number $\theta, 0<\theta<1$, Calder\'on constructed a subspace $[E_0, E_1]_{\theta}$ of
$E_0 +E_1$ as follows:
$$
[E_0, E_1]_{\theta} =\{F(\theta): F\in {\mathcal F}(E_0, E_1) \}.
$$
By introducing the norm
$$\|F\|_{[E_0, E_1]_{\theta}}= \inf \{ \|F\|_{\mathcal F}:  F\in {\mathcal F}(E_0, E_1) , \ F(\theta)=f  \},
$$
$[E_0, E_1]_{\theta}$ becomes a Banach space continuously embedded in $E_0+ E_1.$
We next define analytic families of operators. Let $\{T_z\}$ be a family of linear operators indexed
by $z\in {\bar S}$ so that for each $z$, $T_z$ is a mapping of functions in ${\mathcal D}$ to measurable
functions on $E$. Following \cite{SA}, $\{ T_z\}$ is called an analytic family if for any $g\in{\mathcal D}$
and for almost all $y\in E$, $(T_z(g)) (y)$ is analytic in $S$ and continuous on ${\bar S}$. The analytic family
$\{T_z\}$ is of admissible growth if for all $y\in {\mathcal D}$ there exists a constant $C_{\theta}$ and a constant
$a\in\pi$ such that
$$
\sup_{z\in{\bar S}} {\rm log} |(T_z g)(y)| \leq C_{\theta} e^{a|{\rm Im\,  z|}}
$$
for almost all $y\in E$. Then we have the following result, for its proof, we refer it to  \cite{SA}, \cite[Theorem 3]{G}.

\begin{lemma}\label{le8.8}
Let $E_0, E_1, {\mathcal D}$ as before, $0<p_0, p_1\leq \infty$, and let
$\{ T_z\}$  be an analytic family of linear operators which is of admissible growth. If for all
$f\in {\mathcal D}$ $\|T_zf\|_{L^{p_j}}\leq c_j(z)\|f\|_{E_j}$ when ${\rm Re} z=j, \, j=0, 1$ for some constants
$c_j(z)$ that satisfy
${\rm log} c_j(z)\leq A  e^{a|{\rm Im\, z|}}, A>0, 0\leq a<\pi
$, then for all $z\in S$ there exist $A_z>0$ such that
for $f\in {\mathcal D}$,
 $$
 \|T_zf \|_{L^p}\leq A_z \|f\|_{[E_0, E_1]_{\theta}}, \ \ \ \  {\rm when} \ \ \ \ {\rm Re z}=\theta
 $$
 where
 $$
 {1\over p}={1-\theta\over p_0} + {\theta\over p_1}.
 $$
 \end{lemma}

 We now apply Lemma~\ref{le8.8} to prove \eqref{e1.555}. Let $E_0=H^1_L(X)$, $E_1=H^2(X)$ and  ${\mathcal D}=H^1_L(X) \cap H^2(X)$,
 and ${\mathcal D}$ is dense in both $H^1(X)$. Since
 $$
 [H^1_L(X), H^2(X)]_{\theta}= H_L^p(X),\ \ \ \ \  {1\over p}=1-{\theta\over 2}.
 $$
 Consider the analytic family of operators
$$
T_z:=e^{z^2}(1+|t|)^{-zn/2}(1+L)^{-zn/2}e^{itL}, \ \ \ \  0\leq {\rm Re}\, z\leq 1.
$$
Note that $T_z$ is a holomorphic function of $z$ in the sense that
$$
z\to \int_X T_z f(x)g(x)d\mu(x)
$$
for $f,g\in L^2(X)$.
If $y\in{\mathbb R}$, then
$$
T_{iy}(L)=e^{-y^2}(1+|t|)^{-iyn/2}(1+L)^{-iny/2}e^{itL}.
 $$
Since $H^2_L(X)\subseteq L^2(X)$, we have
\begin{eqnarray*}
\|T_{iy}(f)\|_{L^2(X)}&=&\|e^{-y^2}(1+|t|)^{-iyn/2}(1+L)^{-iny/2}e^{itL}f\|_2\\
&\leq& C\|(1+\lambda)^{-iny/2}e^{it\lambda}\|_{L^\infty}\|f\|_2\\
&\leq& C\|f\|_{L^2(X)}  \leq C\|f\|_{H_L^2(X)}
\end{eqnarray*}
with $C$ independent of $t$ and $y$.
On the other hand,  it follows from Proposition~\ref{prop2.7} that
$$
\|(1+L)^{-iny/2}\|_{H_L^1(X)\to H^1_L(X)}\leq C(1+|y|)^{n/2+1}.
$$
This, together with    \eqref{e1.6}, shows that $(1+L)^{-n/2}e^{itL}$ is   bounded from
$H^1_L(X)$ to $L^1(X)$ and
$$
\|T_{1+iy}(f)\|_{L^1(X)}\leq Ce^{1-y^2}(1+|t|)^{-n/2}\|(1+L)^{-n/2}e^{itL}\|_{H_L^1(X)\to L^1(X)} \|(1+L)^{-iny/2}f\|_{H_L^1(X)}
\leq C\|f\|_{H_L^1(X)}
$$
with $C$ independent of $t$ and $y$.
Then by   Lemma~\ref{le8.8}, we have that for $\theta=1-2/p$
and $s=n(1/2-1/p)$
$$
\|(1+L)^{-s}e^{itL}f\|_{L^p(X)}=\|e^{-\theta^2}(1+|t|)^{\theta n/2}T_\theta f\|_{L^p(X)}\leq C(1+|t|)^{s}\|f\|_{H_L^p(X)}
$$
as desired for $1<p\leq 2$. This proves \eqref{e1.555}.

 By duality, estimate \eqref{e1.55522} holds for $2<p<\infty$. This completes the proof of Theorem~\ref{th1.1}.
\hfill{}$\Box$

\medskip
 \section{Application:  Schr\"odinger  groups for the Kohn Laplacian}
\setcounter{equation}{0}
In this section, we give an application of Theorem~\ref{th1.1} to the Kohn Laplacian $\Box_b$ on polynomial model
domains treated by Nagel-Stein~\cite{NS}.

Let $M$ be the boundary of an unbounded polynomial domain $\Omega:= \{(z,w)\in\mathbb{C}^2:\ {\rm Im}(w)>P(z)\}$, where $P$ is a
real, subharmonic, nonharmonic polynomial of degree $m$ (see \cite{NS}).
Let $\overline{\partial}_b$ be the tangential Cauchy-Riemann operator on $M$ which maps functions to $(0, 1)$-forms,
and let $\big(\overline{\partial}_b\big)^{\ast}$ be the formal
adjoint which maps $(0, 1)$-forms to functions. As in \cite{NS}, choose real vector fields $X_1, X_2$ on $M$ so that
we can identify $\overline{\partial}_b f$ with
\begin{align*}
(X_1 + i X_2) f
\end{align*}
by identifying functions and $(0,1)$ forms on $M$. Then
we define Kohn Laplacian $\Box_b$ acting on functions by
$\Box_b:= \big(\overline{\partial}_b\big)^{\ast}\overline{\partial}_b$.
Since $\Box_b$ is a self-adjoint operator, it admits a spectral decomposition $E(\lambda)$;
in particular, $E(0) = \pi$, where $\pi$ is the Szeg\"{o} projection from $L^2(M)$ to the null space of $\Box_b$.
It is known (see \cite{Str}) that the heat kernel $ K_{e^{-s\Box_b}} (x, y)$ of $e^{-s\Box_b}$   is in terms of Carnot-Carath\'eodory distance $d$ on $M$,
and there  exist two positive constants $C$ and $c$ such that
\begin{eqnarray}\label{e7.2}
\left|
K_{e^{-s\Box_b}} (x, y)\right| \leq \frac{C}{V(x, d(x,y))} \exp\left(-c{d(x,y)^{2}/ s}\right),
\end{eqnarray}
where $V(x, \delta)$ denotes the volume of ball of radius $\delta$ in the $d$ metric, centered at $x$.
Note that there exist $C$ and $Q$ such that
\begin{eqnarray}\label{qqq}
V(x, \lambda \delta)\leq C\lambda^Q V(x,  \delta), \ \ \ \ \lambda\geq 1,
\end{eqnarray}
and  so
\begin{eqnarray}\label{qqqq}
V(x, r)\leq C\left(1+ {d(x,y)\over r}\right)^Q V(y, r),
\end{eqnarray}
uniformly for all $x, y\in M$ and $r>0.$
It is worth pointing out that the heat kernels $K_{e^{-s\Box_b}} (x, y)$
of the Kohn  Laplacian $\Box_b$ on the boundary $M$ do not satisfy standard Gaussian
upper bounds \eqref{GE} with $m=2$. However, the Kohn Laplacian $\Box_b$ satisfies the finite speed property of
propagation for the corresponding wave equation(see Theorem 2.3, \cite{Str}). Equivalently, according to \cite[Theorem 2]{Sik}, the Kohn Laplacian $\Box_b$
satisfies $m$-th order Davies-Gaffney  estimates~\eqref{DG} with $m=2$. Thus we would like to apply Theorem~\ref{th1.1}
and relation between $H_{\Box_b}^p(M)$ and $L^p(M)$ to get the endpoint $L^p$-boundedness of the Schr\"odinger group $e^{it\Box_b}$. Before
that we state the following proposition for $\Box_b$, which is used to derive estimates on $L^p(M)$.
\begin{prop}\label{prop7.1} We have the following results:

\begin{itemize}
\item[(1)]
  $F(\Box_b)f=F(\Box_b)(f-\pi f)+F(0)\pi f$, where $\pi$ is the Szeg\"o projection operator and is bounded on $L^p(M)$.

\item[(2)]
 Suppose $m\in \mathcal{S}([0,\infty))$, $m(0)=0$ and $r>0$, where $\mathcal{S}([0,\infty))$ is the Schwarz class on
$[0,\infty)$. Then the kernel of $m(r^2\Box_b)$ satisfies
$$
|K_{m(r^2\Box_b)}(x,y)|\leq C_N \left(1+\frac{\rho(x,y)}{r}\right)^{-N}\frac{1}{V(x,\rho(x,y)+r)}.
$$

\item[(3)]
 Let $\psi\in C_c^\infty(1,4)$ and $\sum_j\psi(2^j\lambda)=1$ for $\lambda>0$. Then there exists $\widetilde\psi\in C_c^\infty(1,4)$ such that for $\pi f=0$
$$
f(x)=\sum_j \psi(2^j\Box_b) \widetilde\psi(2^j\Box_b)f(x).
$$

\item[(4)]
 Define the discrete square function by
$$
G_{d, \Box_b}(f)(x)=\left(\sum_j |\psi(2^j\Box_b)f(x)|^2\right)^{1/2}.
$$
Then for $\pi f=0$,
$$
\|G_{d, \Box_b}(f)\|_{L^p}\sim \|f\|_{L^p}.
$$
\end{itemize}
\end{prop}

\begin{proof}
The properties (1)-(4) are from Proposition 7.4 and estimates (16)--(18) in p. 880 of \cite{Str}.
\end{proof}

Recall that $S_{\Box_b}f$ is the area function of $\Box_b$ given in \eqref{e2.1}.
We have the following result.

\begin{prop}\label{prop7.2}
If $\pi f=0$, then for $1<p<\infty$
$$
\|S_{\Box_b}f\|_{L^p(M)}\leq C\|f\|_{L^p(M)}.
$$
\end{prop}
\begin{proof}
First, if $m,\phi\in \mathcal{S}([0,\infty))$, $|m(\lambda)|+|\phi(\lambda)|\leq C\lambda^\epsilon$ around $\lambda=0$ for some $\epsilon>0$,
then applying (2) of Proposition~\ref{prop7.1}, the kernel of $m(r^2\Box_b)\phi(s^2\Box_b)$ satisfies
\begin{eqnarray}\label{e7.almost}
|K_{m(r^2\Box_b)\phi(s^2\Box_b)}(x,y)|\leq C_N
\left(\min\{\frac{s}{r},\frac{r}{s}\}\right)^{\epsilon} \left(1+\frac{\rho(x,y)}{\max\{s,r\}}\right)^{-N}\frac{1}{V(x,\max\{s,r\})}
\end{eqnarray}
for all $N>0$.

Let $\psi$ and $\widetilde\psi$ be functions in (3) of Proposition~\ref{prop7.1} and thus we have that for $\pi f=0$
\begin{eqnarray}\label{vv}
f(x)=\sum_{j=-\infty}^{\infty} \psi(2^j\Box_b) \widetilde\psi(2^j\Box_b)f(x).
\end{eqnarray}
Define the square function
$$
{G}_{c, \Box_b}(f)(x)= \left(\int_0^\infty   |\psi(t^2\Box_b) f(x)|^2\frac{dt}{t}\right)^{1/2}.
$$
Then  for $\pi f=0$, we apply \eqref{vv}  and \eqref{e7.almost} to obtain
\begin{eqnarray*}
G_{c, \Box_b}(f)(x)&\leq& C\left(\int_0^\infty \sum_j |\psi(t^2\Box_b)\psi(2^j\Box_b) \widetilde\psi(2^j\Box_b)f(x)|^2\frac{dt}{t}\right)^{1/2}\\
&\leq& C\left(\sum_j  \int_{2^{j-1}}^{2^{j+1}} |\psi(t^2\Box_b) \widetilde\psi(2^j\Box_b)\psi(2^j\Box_b)f(x)|^2\frac{dt}{t}\right)^{1/2}\\
&\leq& C\left(\sum_j  \int_{2^{j-1}}^{2^{j+1}} \left|\int_M K_{\psi(t^2\Box_b)
 \widetilde\psi(2^j\Box_b)}(x,y)\psi(2^j\Box_b)f(y)dy\right|^2\frac{dt}{t}\right)^{1/2}\\
&\leq& C\left(\sum_j   \left|{\mathfrak M}(\psi(2^j\Box_b))f(x)\right|^2\right)^{1/2}
\end{eqnarray*}
where    $\mathfrak M$ denotes the Hardy-Littlewood maximal function, that is
 \begin{equation*}
\mathfrak  Mf(x)=\sup_{x\in B} {1\over V(B) }\int_{B}
|f(y)|dy.
\end{equation*}
Hence,
$$
\|G_{c, \Box_b}(f)\|_{L^p}\leq C\left\|\left(\sum_j   \left|{\mathfrak M} (\psi(2^j\Box_b))f(x)\right|^2\right)^{1/2}\right\|_{L^p}
\leq C \|G_{d, \Box_b}(f)\|_{L^p}\leq C\|f\|_{L^p}.
$$

Next directly computation shows that for all $\alpha>0$,
$$
S_{\Box_b}f(x)\leq C_\alpha\left(\int_0^\infty |{\mathfrak M}^*_{\alpha,\Box_b} (f)(x,t)|^2\frac{dt}{t}\right)^{1/2}
$$
where ${\mathfrak M}^*_{\alpha,\Box_b} (f)$ is the Peetre type maximal function (see for example \cite{BPT, Hu}) given by
$$
{\mathfrak M}^*_{\alpha,\Box_b} (f)(x,t):=\sup_{y\in M}\frac{|t^2\Box_b e^{-t^2\Box_b}f(y)|}{(1+t^{-1}d(x,y))^\alpha}.
$$
Then applying \eqref{e7.almost} gives
\begin{eqnarray*}
|t^2\Box_b e^{-t^2\Box_b}f(y)|&\leq& \sum_j |\psi(2^jt^2\Box_b) \widetilde\psi(2^jt^2\Box_b)t^2\Box_b e^{-t^2\Box_b}f(y)|\\
&\leq& \sum_j \int_M \left|K_{\widetilde\psi(2^jt^2\Box_b)t^2\Box_b e^{-t^2\Box_b}}(y,z) \psi(2^jt^2\Box_b) f(z)\right|dz\\
&\leq& \sum_{j\geq 0} C2^{-j\epsilon} \int_M \left(1+\frac{\rho(y,z)}{2^{j/2}t}
\right)^{-N}\frac{1}{V(z,2^{j/2}t)} \left|\psi(2^jt^2\Box_b) f(z)\right|dz\\
&&+\sum_{j<0} C2^{j\epsilon} \int_M \left(1+\frac{\rho(y,z)}{t}\right)^{-N}\frac{1}{V(z,t)} \left|\psi(2^jt^2\Box_b) f(z)\right|dz.
\end{eqnarray*}
Using \eqref{qqqq}, we have
\begin{eqnarray*}
\frac{|t^2\Box_b e^{-t^2\Box_b}f(y)|}{(1+t^{-1}d(x,y))^\alpha}
&\leq& \sum_{j\geq 0} C2^{-j\epsilon} \int_M
\left(1+\frac{\rho(x,z)}{2^{j/2}t}\right)^{-\alpha+Q}\frac{1}{V(x,2^{j/2}t)} \left|\psi(2^jt^2\Box_b) f(z)\right|dz\\
&&+\sum_{j<0} C2^{j\epsilon} \int_M \left(1+\frac{\rho(x,z)}{t}\right)^{-\alpha+Q}\frac{1}{V(x,t)} \left|\psi(2^jt^2\Box_b) f(z)\right|dz\\
&\leq& \sum_{j\geq 0} C2^{-j\epsilon} {\mathfrak M}( \psi(2^jt^2\Box_b) f)(x)+\sum_{j<0} C2^{j\epsilon}  {\mathfrak M}( \psi(2^jt^2\Box_b) f)(x).
\end{eqnarray*}
Now
\begin{eqnarray*}
S_{\Box_b}f(x)&\leq &C_\alpha\left(\int_0^\infty | {\mathfrak M}^*_{\alpha,\Box_b} (f)(x,t)|^2\frac{dt}{t}\right)^{1/2}\\
&\leq& C_\alpha \sum_{j\geq 0} 2^{-j\epsilon}\left(\int_0^\infty |{\mathfrak M}( \psi(2^jt^2\Box_b) f)(x)|^2\frac{dt}{t}\right)^{1/2}\\
&&+C_\alpha \sum_{j< 0} 2^{j\epsilon}\left(\int_0^\infty |{\mathfrak M}( \psi(2^jt^2\Box_b) f)(x)|^2\frac{dt}{t}\right)^{1/2}\\
&\leq& C_\alpha \left(\int_0^\infty |{\mathfrak M}( \psi(t^2\Box_b) f)(x)|^2\frac{dt}{t}\right)^{1/2}.
\end{eqnarray*}
Then for $1<p<\infty$,
$$
\|S_{\Box_b}f\|_{L^p}\leq C \left\|
\left(\int_0^\infty |{\mathfrak M}( \psi(t^2\Box_b) f)(x)|^2\frac{dt}{t}\right)^{1/2}\right\|_{L^p}\leq C \|G_{c, \Box_b} f\|_{L^p}\leq C\|f\|_{L^p}.
$$
The proof of Proposition~\ref{prop7.2} is complete.
\end{proof}

Note that the Kohn Laplacian $\Box_b$ on $M$ satisfies the $m$-th order Davies-Gaffney  estimate~\eqref{DG} with $m=2$.
Recall that $Q$ is   the ``dimension" of $M$ in \eqref{qqq}.
We can apply Theorem~\ref{th1.1} to prove the following result.

\begin{thm}\label{Th5.3}
There exists a  constant $C>0$ independent of $t$ such that for $1<p<\infty$,
\begin{eqnarray}\label{e7.44}
\|(1+\Box_b)^{-s}e^{it\Box_b}f\|_{L^p(M)}\leq C(1+|t|)^{s}\|f\|_{L^p(M)}, \ \ \ t\in{\mathbb R}, \ \
  \ s\geq Q\big|{1\over  2}-{1\over  p}\big|.
\end{eqnarray}
\end{thm}

\begin{proof}
Let  $g=f-\pi f$. We have that $\pi g=0$. Note that $F(\Box_b)f=F(\Box_b)g+F(0)\pi f$.
For $1<p\leq 2$, we  apply  Theorem~\ref{th1.1} and Proposition~\ref{prop7.2} to obtain
\begin{eqnarray*}
\|(1+\Box_b)^{-s}e^{it\Box_b}f\|_{L^p(M)}&\leq& \|(1+\Box_b)^{-s}e^{it\Box_b}g\|_{L^p(M)}+\|\pi f\|_{L^p(M)}\\
&\leq& C(1+|t|)^{s}\|g\|_{H_{\Box_b}^p(M)}+C\| f\|_{L^p(M)}\\
&=& C(1+|t|)^{s}\|S_{\Box_b}g\|_{L^p(M)}+C\| f\|_{L^p(M)}\\
&\leq& C(1+|t|)^{s}\|g\|_{L^p(M)}+C\| f\|_{L^p(M)}\\
&=& C(1+|t|)^{s}\|f-\pi f\|_{L^p(M)}+C\| f\|_{L^p(M)}\\
&\leq& C(1+|t|)^{s}\|f\|_{L^p(M)}.
\end{eqnarray*}
By duality, we have the result \eqref{e7.44} for $p>2$. This completes the proof of  Theorem~\ref{Th5.3}.
\end{proof}

 \medskip
\noindent
{\bf Acknowledgements}:
P. Chen was supported by NNSF of China 11501583, Guangdong Natural Science Foundation
2016A030313351.
X. Duong was supported by the Australian Research Council (ARC) through the research
grant DP190100970. J. Li was supported by the Australian Research Council (ARC) through the
research grant DP170101060 and by Macquarie University Research Seeding Grant.
L. Yan was supported by the NNSF of China, Grant
No. 11521101 and 11871480, and by the Australian Research Council (ARC) through the research
grant DP190100970.
 We would like to thank T. A. Bui, Z. Fan, E.M. Ouhabaz, A. Raich, A. Sikora and L. Song
  for helpful discussions.

 \vskip 1cm

 \end{document}